\documentclass[11pt, a4paper]{amsart}
\usepackage{latexsym,mathrsfs,bm}
\usepackage{graphicx,color,url}
\usepackage{amssymb,mathrsfs}
\addtolength{\textwidth}{3 truecm}
\addtolength{\textheight}{1 truecm}
\setlength{\voffset}{-0.6 truecm}
\setlength{\hoffset}{-1.3 truecm}

\usepackage{hyperref}

\usepackage[english]{babel}
\usepackage{paralist}
\usepackage{amsthm,amsfonts,amssymb,amsmath}
\usepackage{mathtools}
\usepackage[utf8]{inputenc}

\newcommand{\Q}{\mathbb{Q}}
\newcommand{\F}{\mathbb{F}}
\newcommand{\C}{\mathbb{C}}
\newcommand{\N}{\mathbb{N}}

\newcommand{\Z}{\mathbb{Z}}

\newcommand{\es}[1]{\begin{equation}\begin{split}#1\end{split}\end{equation}}
\newcommand{\est}[1]{\begin{equation*}\begin{split}#1\end{split}\end{equation*}}

\newcommand{\nequiv}{\not\equiv}

\newcommand{\leg}[2]{\left(\frac{#1}{#2}\right)}

\renewcommand{\mod}[1]{~\pr{\textnormal{mod}~#1}}
\newtheorem*{theo*}{Theorem}
\newtheorem{theo}{Theorem}

\newtheorem*{ex}{Example}

\newtheorem{lemma}{Lemma}
\newtheorem{prop}[lemma]{Proposition}
\newtheorem{conj}{Conjecture}
\newtheorem{defin}[lemma]{Definition}

\newtheorem{corol}[lemma]{Corollary}
\newtheorem{remark}[lemma]{Remark}
\newtheorem*{rem*}{Remark}

\newcommand{\pr}[1]{\left( #1\right)}

\DeclareMathOperator{\Gal}{Gal}

\newcommand{\Res}{\operatorname*{Res}}
\newcommand{\rank}{\operatorname*{rank}}

\newcommand{\E}{\mathcal{E}}
\newcommand{\D}{D}

\let\originalleft\left
\let\originalright\right
\renewcommand{\left}{\mathopen{}\mathclose\bgroup\originalleft}
\renewcommand{\right}{\aftergroup\egroup\originalright}

\definecolor{pink}{rgb}{1,.2,.6}
\definecolor{orange}{rgb}{0.7,0.3,0}
\definecolor{blue}{rgb}{.2,.6,.75}
\definecolor{green}{rgb}{.4,.7,.4}

\newcommand{\suppress}[1]{}

\numberwithin{equation}{section}

\title{Ranks of elliptic curves over $\Q(T)$ of small degree in $T$} 

\author[F.~Battistoni]{Francesco Battistoni}
\address{Laboratoire de math\'ematiques de Besan\c con, Universit\'e de Bourgogne Franche-Comt\'e, CNRS UMR 6623, 16, route de ray, 25030 Besan\c con cedex, France}
\email{francesco.battistoni@univ-fcomte.fr}

\author[S.~Bettin]{Sandro Bettin}
\address{Dipartimento di Matematica, Università di Genova, Via Dodecaneso 35, 16146 Genova, Italy}
\email{bettin@dima.unige.it}

\author[C.~Delaunay]{Christophe Delaunay}
\address{Laboratoire de math\'ematiques de Besan\c con, Universit\'e de Bourgogne Franche-Comt\'e, CNRS UMR 6623, 16, route de ray, 25030 Besan\c con cedex, France}
\email{christophe.delaunay@univ-fcomte.fr}

\makeatletter
\@namedef{subjclassname@2020}{%
  \textup{2020} Mathematics Subject Classification}
\makeatother

\begin{document}

\begin{abstract}
We study elliptic surfaces over $\Q(T)$ with coefficients of a Weierstrass model being polynomials in $\Q[T]$ with degree at most 2. We derive an explicit expression for their rank over $\Q(T)$ depending on the factorization and other simple properties of certain polynomials.  Finally, we give sharp estimates for the ranks of the considered families and we present several applications, among which there are lists of rational points, generic families with maximal rank and generalizations of former results.
\end{abstract}

\subjclass[2020]{11G05, 14G05}

\keywords{Rational elliptic surface, rank, elliptic curves, rational points.}

\maketitle


\section{Introduction}
Consider an elliptic curve defined over the function field $\Q(T)$ and with Weierstrass model
\es{\label{eqine}
\E \colon Y^2= X^3 + \alpha_2(T)X^2 + \alpha_4(T) X + \alpha_6(T)
}
where $\alpha_2, \alpha_4$ and $\alpha_6$ are polynomials in $\Q[T]$. It is known (see~\cite{ellipticSurfaces}) that if $\deg \alpha_i\leq i$ for $i=2,4,6$, then $\E$ is a rational elliptic surface over $\Q$.  

In this paper we are interested in the rank $r=r_{\E}$ of $\E(\Q(T))$. Aside from its intrinsic geometric
interest for elliptic surfaces, the rank is also an arithmetical invariant that is related to several questions in number theory. For example, 
Silverman's specialization theorem asserts that for almost all specializations of $T$ at $t \in \Q$, the rank of the associated elliptic curve defined over $\Q$ 
is at least $r_{\mathcal E}$: this has direct consequences on the study of the distribution of ranks of families of elliptic curves defined over $\Q$, 
or over number fields $K$ \cite{miller, david_all, stewart_top, rubin_silverberg} and on the research of elliptic curves with high rank
\cite{mestre, fermigier, arms_all}.
The study of the 
rank, $r_{\mathcal E}$, has also 
some impact on a specific question in arithmetic geometry, concerning whether the set ${\mathcal E}(\Q)$ is Zariski-dense in ${\mathcal E}$:  
the question is positively answered whenever $r_{\mathcal E} >0$, and if  $r_{\mathcal E}=0$ a sufficient criterion consists in showing that there 
exist  infinitely many specializations of  $T$ at $t \in \Q$ such that the rank of the corresponding curve over $\Q$ is positive (see \cite{mazur} and 
recent works by J.~Desjardins on the 
subject, \cite{desjardins1, desjardins2}). For example, under the parity conjecture this can be done by studying the behavior of the root 
numbers of the specializations. 

In the works cited above, it is classical to consider $\E$ given as a polynomial in $X$ (and $Y$) with coefficient in $\Q(T)$; the rank of $\E$ over 
$\overline{\Q}(T)$ may then be recovered from the Shioda-Tate formula which is given for non-isotrivial rational elliptic surfaces by (\cite{ellipticSurfaces}):
\begin{equation}\label{rankclos}
r_{\E/\overline{\Q}(T)} = 8 - \sum_v (m_v-1),
\end{equation}
where $m_v$ is the number of the distinct irreducible components of the reduction of $\E$ at $v$. Whenever $\E$ runs through a specific family of 
rational elliptic surfaces, a good knowledge of the reductions (implying some restriction on $\alpha_2(T), \alpha_4(T)$ and $\alpha_6(T)$) allows one to deduce 
more arithmetical information such as, for example, the rank over $\Q(T)$ and a set of points (that have to be found in an ad hoc way) of $\E(\Q(T))$ generating 
a subgroup of finite index and finally to address arithmetical-geometry questions for the family.
\medskip

Our approach is instead different since the use of Nagao's formula (see Conjecture~\ref{nagao}) leads us to consider $\E$ rather as a 
polynomial in $T$ (and $Y$) with coefficient in $\Q[X]$ (see Equation~\eqref{familyCurve}). 
We then obtain a closed formula  for the rank $r_\E$ of $\E(\Q(T))$, involving simple properties of these polynomial coefficients, for all $\E$ such that the degrees of $\alpha_2, \alpha_4$ and $\alpha_6$ are $\leq 2$. Furthermore, the proof of the formula gives 
naturally a set of $r_\E$ points in $\E(\Q(T))$ which appear to be good candidates for generating a finite index subgroup. However, in general
we do not have enough control on the reduction type and on the geometry in order to deduce the independence of the points.
 
As mentioned above, we consider the case of $\deg(\alpha_i)\leq 2$ for $i=2,4,6$ in~\eqref{eqine} and find convenient to rewrite the Weierstrass model as
\begin{equation}\label{familyCurve}
\E \colon Y^2 = A(X) T^2 + B(X)T +C(X),    
\end{equation}
where $A,B,C\in\Q[X]$ with $\deg(A),\deg(B)\leq2$ and $C(X)$ a monic polynomial of degree $3$. We assume that at least one between $A(X)$ or $B(X)$ is not the zero polynomial, otherwise $\E$ is a constant elliptic curve over $\Q(T)$.

Before stating our main theorem, we need to fix some notation.
For a non-zero polynomial $P\in\Q[X]$ we let $\Omega(P)$ and $\omega(P)$ be the number of irreducible factors of $P$ counted with and without multiplicity respectively. Also, we let $P^*$ be the product of the (monic) irreducible factors dividing $P$ and we define $\square(P):=1$ if $P$ is a non-zero square in $\Q[X]$ and $\square(P):=0$ otherwise. Also, for $0\neq k\in\Q$ and $P$ irreducible we let $\sigma(k,P):=1$ if $k$ is a square in $\Q[X]/P(X)$ and $\sigma(k,P):=0$ otherwise. If $P$ is not irreducible, we let
\est{
\sigma(k,P):=\sum_{F|P,\atop F\text{ irr.}}\sigma(k,F).
}
In particular, $\sigma(k,P)=\omega(P)$ if $k\in\Q^2$.

Given two polynomials $P_1,P_2\in\Q[X]$ with $P_2\neq0$ we define $\gcd(P_1,P_2)\in\Q[X]$ to be the (monic) greatest common divisor of $P_1$ and $P_2$ and we let $M_{P_1,P_2}(X)\in\Q[X]$ to be the resultant (with respect to the variable $Y$) of $P_1(Y)$ and $X^2-P_2(Y)$, that is
\es{\label{dfrs}
M_{P_1,P_2}(X) := \Res_Y(P_1(Y),X^2-P_2(Y)).
}
Also, we let $\Upsilon_{P_1,P_2}$ be defined as in~\eqref{dfups}. Finally, if $A$ is a linear polynomial with a zero $\alpha$ we let 
\es{\label{dfxi}
\Xi_{A,P_1,P_2}:=\begin{cases}
1-\square(P_2(\alpha)) & \text{if $A|P_1$,}\\
0 & \text{ otherwise.}
\end{cases}
}

\begin{theo} \label{mtt}
Let $A,B,C\in\Q[X]$ with $\deg(A),\deg(B)\leq2$ and $C(X)$ a monic polynomial of degree $3$. Also, assume $A$ and $B$ are not both zero. Then, the rank over $\Q[T]$ of the elliptic curve $\E$ defined in~\eqref{familyCurve} is
\est{
r_\E=\begin{cases}
\Omega(M_{B^*,C})-2\deg(\gcd(B^*,C))+\omega(\gcd(B, C))-\omega(B)-\Upsilon_{B,C} & \text{if $A=0$,}\\
\sigma(A,B^2-4AC)-\square(k)  & \text{if $A\in\Q\setminus\{0\}$,}\\
\Omega(M_{(B^2-4AC)^*,A})-\omega(B^2-4AC)-\Xi_{A,B,C} & \text{if $\deg(A)=1$,}\\
\end{cases}
}
whereas for $\deg(A)=2$,
\est{
r&=\Omega(M_{\gcd(A,B^*),C})+\Omega(M_{(B^2-4AC)^*,A})+\omega(\gcd(A,B,C))-\Upsilon_{\gcd(A,B),C}-\Upsilon_{B^2-4AC,A}+{}\\
&\quad-\omega(B^2-4AC)-2\deg(\gcd(A,B^*,C)) -2\deg(\gcd(A,B^*))-\square(A).
}
\end{theo}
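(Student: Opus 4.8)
The plan is to compute $r_\E$ via Nagao's formula, as anticipated in the introduction. For a rational elliptic surface over $\Q$ this formula is a theorem (it is implied by the Tate conjecture, which holds trivially here as $H^2$ is algebraic), so the first step is to record that our hypotheses place us there: $\deg A,\deg B\le 2$ and $C$ monic of degree $3$ force $\deg\alpha_i\le i$ in~\eqref{eqine}, whence the relatively minimal model of $\E$ is a rational elliptic surface over $\Q$ (cf.~\cite{ellipticSurfaces}), and we may assume $\E$ is an elliptic curve over $\Q(T)$ as is implicit in the statement. Thus, writing $a_t(p):=p+1-\#\E_t(\mathbb F_p)$,
\est{
r_\E=\lim_{X\to\infty}\frac1X\sum_{p\le X}\Bigl(-\frac1p\sum_{t\in\mathbb F_p}a_t(p)\Bigr)\log p.
}

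The second step is an exact evaluation of the inner double sum. Since $f_t(X):=A(X)t^2+B(X)t+C(X)$ is monic of degree $3$ in $X$, for $p-O(1)$ values of $t$ one has $a_t(p)=-\sum_{x\in\mathbb F_p}\chi_p(f_t(x))$ (the remaining $t$ contributing $O(1)$ in total), where $\chi_p$ is the Legendre symbol with $\chi_p(0):=0$. Interchanging the summations and evaluating, for each $x$, the quadratic character sum $\sum_t\chi_p(A(x)t^2+B(x)t+C(x))$ by completing the square — it equals $-\chi_p(A(x))$, resp.\ $(p-1)\chi_p(A(x))$, if $A(x)\ne0$ and $\Delta(x)\ne0$, resp.\ $\Delta(x)=0$, with $\Delta:=B^2-4AC$; it equals $0$ if $A(x)=0\ne B(x)$; and it equals $p\,\chi_p(C(x))$ if $A(x)=B(x)=0$ — one obtains, up to $O(1/p)$,
\est{
-\frac1p\sum_{t\in\mathbb F_p}a_t(p)=-\frac1p\sum_{x\in\mathbb F_p}\chi_p(A(x))+\sum_{\substack{x\in\mathbb F_p\\A(x)\ne0,\ \Delta(x)=0}}\chi_p(A(x))+\sum_{\substack{x\in\mathbb F_p\\A(x)=B(x)=0}}\chi_p(C(x)).
}

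The third step passes to the limit term by term. The $O(1/p)$ error dies by the prime number theorem; the term $-\tfrac1p\sum_x\chi_p(A(x))$ is $O(1/p)$ unless $A$ is a rational constant times a square of a linear polynomial, in which case evaluating $\sum_x\chi_p(A(x))$ (which is $p\,\chi_p(\operatorname{lc}A)$, $(p-1)\chi_p(\operatorname{lc}A)$, or $0$ according as $\deg A=0,2,1$) produces the contributions $-\square(A)$, resp.\ $-\square(k)$. For the two remaining sums, group the roots of $\Delta^*$, resp.\ of $\gcd(A,B)$, by minimal polynomial: an irreducible factor $F$ with root $\gamma_F$ generating $K_F:=\Q[X]/(F)$ contributes, over a prime $p$, the quantity $\sum_{\mathfrak p\mid p,\,f_{\mathfrak p}=1}\bigl(\tfrac{A(\gamma_F)}{\mathfrak p}\bigr)$, resp.\ the analogous sum with $C$ in place of $A$; by Chebotarev applied to $K_F(\sqrt{A(\gamma_F)})/\Q$ its $p\le X$ average equals $1$ if $A(\gamma_F)$ is a non-zero square in $K_F$ and $0$ otherwise. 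Now ``$A(\gamma_F)\in(K_F^\times)^2$'' is exactly the condition governing the $\Q$-factorization of the norm form $M_{F,A}(X)=\operatorname{Nm}_{K_F/\Q}(X^2-A(\gamma_F))$ — generically $\Omega(M_{F,A})=1+\mathbf{1}[A(\gamma_F)\in(K_F^\times)^2]$ — so the two sums average, up to corrections, to $\Omega(M_{\Delta^*,A})-\omega(\Delta)$ and $\Omega(M_{\gcd(A,B^*),C})-\omega(\gcd(A,B))$; the degenerate configurations — $F\mid A$, giving the factor $X^2$ and the $2\deg\gcd$ terms; $F$ dividing $C$ as well, giving the $\omega(\gcd)$ terms; $A(\gamma_F)$ or its square root failing to generate $K_F$, giving the $\Upsilon$ terms; the behaviour at $X=\infty$, giving the $\Xi$ term when $\deg A=1$ — account for the rest. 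Separating the four cases $A=0$ (only the second sum survives), $A\in\Q^\times$ (only the first, with $\sum_x\chi_p(A)=p\,\chi_p(A)$ feeding $-\square(k)$ and the first sum giving $\sigma(A,\Delta)$ since $A(\gamma_F)=A$ is constant), $\deg A=1$, and $\deg A=2$, then reassembles the stated formula.

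The main obstacle is this last, combinatorial step: one must isolate and prove a clean dictionary — in the simplest case, $A=0$, it reads $\frac1X\sum_{p\le X}\bigl(\sum_{x:\,B(x)=0}\chi_p(C(x))\bigr)\log p=\Omega(M_{B^*,C})-\omega(B)-2\deg\gcd(B^*,C)+\omega(\gcd(B,C))-\Upsilon_{B,C}$ — and then verify its variants and combinations in all configurations. Its proof requires Kummer theory (to turn squareness of $P_2(\gamma_F)$ in $K_F$ into the splitting of $X^2-P_2(\gamma_F)$, hence into the $\Q$-factorization of $M_{F,P_2}$), the Chebotarev density theorem (for the cancellation of the $\pm1$'s at non-square values in the limit, and to discard the finitely many ramified or exceptional primes), and a patient analysis of the degenerate loci — common factors of $A$, $B$, $C$, the vanishing loci of $A$ and $C$, and the possibilities at $X=\infty$ dictated by $\deg A$ — which is precisely what the definitions of $\Upsilon$ and $\Xi$ are built to record; carrying this out in every case is the bulk of the argument. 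Finally, although not needed for the rank statement itself, the transition from non-square to square values above exhibits explicit sections of $\E$: from a root $\delta$ of $\Delta$ with $A(\delta)$ a square, the point $\bigl(\delta,\sqrt{A(\delta)}\,(T+\tfrac{B(\delta)}{2A(\delta)})\bigr)$ over $\Q(\delta)(T)$ (coming from $A(\delta)T^2+B(\delta)T+C(\delta)=A(\delta)(T+\tfrac{B(\delta)}{2A(\delta)})^2$); from a common root $\gamma$ of $A$ and $B$ with $C(\gamma)$ a square, the point $(\gamma,\sqrt{C(\gamma)})$ over $\Q(\gamma)(T)$; their Galois traces lie in $\E(\Q(T))$ and are the candidate generators mentioned in the introduction.
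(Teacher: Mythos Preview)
Your outline is essentially the paper's proof: Nagao's formula (valid since $\deg\alpha_i\le i$ makes $\E$ a rational elliptic surface), complete the square in $t$ to reach the decomposition $r=\lim\frac{\log Q}{Q}\sum_{p\le Q}\bigl(S_{\gcd(A,B),C}(p)+S_{B^2-4AC,A}(p)\bigr)-\square(A)$ (the paper's~\eqref{msfr}), then average each piece factor by factor. The one technical divergence is in how you average $S_{F,P_2}(p)$ for an irreducible $F$: the paper inserts $(\tfrac{P_2(x)}{p})=\#\{\ell:\ell^2\equiv P_2(x)\}-1$, turns the double sum into $N_{M_{F,P_2}}(p)$ via a resultant-multiplicity lemma, and averages by $\lim\frac{\log Q}{Q}\sum N_P(p)=\Omega(P)$, with the $\Upsilon$ correction falling out of an explicit count of when $\gcd(F,P_2-\ell^2)$ stays irreducible of degree $2$ (Proposition~\ref{rp4} and Lemma~\ref{lemmaMult}). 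You instead read $S_{F,P_2}(p)$ as $\sum_{\mathfrak p\mid p,\,f_{\mathfrak p}=1}\bigl(\tfrac{P_2(\gamma_F)}{\mathfrak p}\bigr)$, use Chebotarev in $K_F(\sqrt{P_2(\gamma_F)})/\Q$ to get $\mathbf 1[P_2(\gamma_F)\in K_F^{\times2}]$, and then translate back to $\Omega(M_{F,P_2})$ via the factorisation of the norm form. Both work; the paper's route lands on $\Omega(M)$ directly, while yours is more conceptual but needs the dictionary ``$\Omega(M_{F,P_2})-1=\mathbf 1[\text{square}]+\text{corrections}$'' as a separate ingredient (this is essentially Lemma~\ref{factors}, which the paper proves only later, for the applications).

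One correction: the $\Xi$ term has nothing to do with behaviour at $X=\infty$. It arises when $\deg A=1$ and $A\mid B$: then $A\mid B^2-4AC$, so the root $\alpha$ of $A$ is a root of $\Delta$ with $A(\alpha)=0$, contributing $\omega(\gcd)-2\deg\gcd=1-2=-1$ to the average of $S_{\Delta,A}$; simultaneously $\gcd(A,B)=A$ makes $S_{\gcd(A,B),C}$ average to $\square(C(\alpha))$. The two combine to $-(1-\square(C(\alpha)))=-\Xi_{A,B,C}$.
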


\begin{remark}\label{rmktwist} 
Theorem \ref{mtt} can also be used with quadratic twists of elliptic surfaces thus allowing one to recover all the results concerning the rank in the Propositions 8-12 in \cite{bettinDavidDelaunay}. Indeed, it is sufficient to observe that for $w\neq 0$, the twisted curve  $E_{w}: w Y^2 = A(X)T^2 + B(X)T + C(X)$ is 
isomorphic to 
$$
Y^2 = wA(X/w)T^2 + w^2B(X/w)T +w^3C(X/w)
$$
where $w^3C(X/w)$ is a monic polynomial of degree 3; we can then apply the formulae for the rank given in Theorem~\ref{mtt} to these polynomials. 
Notice that one can use the fact that $\Omega(M_{\lambda P_1,w P_2}) = \Omega(M_{P_1,\frac{1}{w}P_2})$ for any non-zero $\lambda, w \in \Z$ to simplify a little bit the final formula.
\end{remark}

Theorem~\ref{mtt} gives the rank $r_{\E}$ in terms of easily computable properties of the polynomials $A,B,C$ such as the factorization of $B^2-4AC$ or of $M_{(B^2-4AC)^*,A}$. The only quantity which might appear less trivial to determine is $\sigma(A,B^2-4AC)$; it consists in 
detecting whether or not an element $k$ is a square in a number field $K$; this is done by factorizing the polynomial $X^2- k$ in $K[X]$ which in return reduces to factorize some polynomials in $\Q[X]$, see \cite[Section 3.6]{cohen0} for details. 
Moreover, Theorem~\ref{mtt} indicates clearly the properties that generate a positive rank. Indeed, if $\deg A\in\{1,2\}$ or if $A\in\Q\setminus\Q^2$, then a positive contribution to the rank $r_{\E}$ typically corresponds to a (Galois conjugacy class of a) root $\rho$ of the polynomial $B^2-4AC$ such that $A(\rho)$ is a square in the number field $\Q(\rho)$. Similarly, if $A=0$, then positive contributions arise from the roots $\rho$ of $B$ such that $C(\rho)$ is a square in the number field $\Q(\rho)$. If instead $A\in\Q^2\setminus\{0\}$, then the rank is just one less than the number of distinct irreducible factors of the radical of $B^2-4AC$. As mentioned above, in all theses cases the Theorem and its proof also suggest natural candidates for independent points. We refer to Section~\ref{appl} for a more detailed discussion as well as more results. In all these cases we will also provide examples and construct families where the rank is maximal.

The structure of the paper is as follows. Section~\ref{sectionb} is devoted to the proof of Theorem~\ref{mtt}. More specifically, in Section~\ref{firstdecomp} we apply Nagao's formula to the curve $\E$ and we evaluate the resulting sums reducing the problem to that of studying averages of sums of the form $S_{P_1,P_2}(p):=\sum_{\substack{ P_1(x) \equiv 0 \mod p}} \leg{P_2(x)}{p}$, where $\leg{\cdot}\cdot$ is the Legendre symbol. We will then present the technical tools required in estimating these averages, with an important role played in particular by Chebotarev's theorem and by properties of the resultant. In Section~\ref{section3} we compute the average value of $S_{P_1,P_2}(p)$ in the relevant cases and in Section~\ref{section4} we use these results to prove Theorem~\ref{mtt}. Section~\ref{appl} is divided in several subsections, one for each possible degree of $A$. For every such subcase, we provide sharp estimates on the rank, show how to obtain generic families having maximal rank and give explicitly some points which we believe would typically generate a finite index subgroup. 
In Section~\ref{difftype} we show how the same method can be applied to the case where $A$ is the cube of a linear polynomial and $\deg(B),\deg(C)\leq3$, recovering and generalizing results from~\cite{arms_all}. Finally, Section~\ref{algrem} deals with the computational features which we took into account in order to provide examples throughout the paper; these arise as results of computations done on the computer algebra packages PARI/GP \cite{pari} and Magma \cite{cannon2011handbook}. We also suggest a possible application to the research of rational points for elliptic curves defined over $\Q$.

\subsection*{Acknowledgments} The first and the last author are supported  by the French ``Investissements d'Avenir" program, project ISITE-BFC (contract ANR-lS-IDEX-OOOB). The second author is member of the INdAM group GNAMPA and his work is partially supported by PRIN 2017 ``Geometric, algebraic and analytic methods in arithmetic''. We thank Chantal David, and Nicolas Mascot for a useful suggestion (see Section~\ref{mascot}).

During the final preparation of this article, we were informed by M.~Sadek that he also used a strategy based on Nagao's conjecture in his upcoming work~\cite{Sadek} where he obtained upper and lower bounds for $r_{\E}$ depending on the factorization of $B^2-4AC$.

\section{Proof of Theorem~\ref{mtt}}\label{sectionb}

\subsection{A first decomposition}\label{firstdecomp}
As in~\cite{arms_all}, our starting point is Nagao's conjecture~\cite{rosen-silverman}.

\begin{conj}[Nagao]\label{nagao} 
Let $\E$ as in~\eqref{eqine}. Then, the rank $r$ of $\E(\Q(T))$ satisfies
$$
r = \lim_{Q\rightarrow \infty} \frac{\log Q}{Q} \sum_{p \leq Q} -A_{\E}({p})
$$
where the sum runs through all prime numbers $p \leq Q$ and
$$
A_{\E}({p}) = \frac{1}{p}\sum_{t=0}^{p-1} a_t({p})
$$
with $p+1 - a_t({p})$ being the number of points of $\E_t(\F_p)$ unless $p$ divides the discriminant of $\E_t$, and in this case $a_t({p}) =0$ (where $\E_t$ denotes the elliptic curve defined over $\Q$ obtained 
by specialization of $T$ at $t\in\Q$). 
\end{conj}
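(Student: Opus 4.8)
The plan is to follow the strategy of Rosen and Silverman~\cite{rosen-silverman}, which reduces Nagao's formula to the Tate conjecture for the elliptic surface attached to $\E$. The decisive simplification in our setting is that, by the hypothesis $\deg\alpha_i\le i$, the surface is \emph{rational}; hence its geometric genus vanishes, the whole of $H^2_{\mathrm{et}}$ is spanned by algebraic cycles, and the Tate conjecture holds unconditionally. In particular the displayed formula is, in this setting, a theorem rather than a conjecture. Write $S$ for the associated smooth minimal model, a smooth projective surface over $\Q$ fibred over $\PP^1$, and fix a prime $\ell$. The backbone of the argument is to express the average trace $A_\E(p)$ through the action of $\mathrm{Frob}_p$ on $H^2_{\mathrm{et}}(S_{\overline{\F}_p},\Q_\ell)$, and then to average this action over $p$.

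First I would compare two evaluations of $\#S(\F_p)$ for primes $p$ of good reduction for $S$. On one side, the Grothendieck--Lefschetz trace formula together with $H^1_{\mathrm{et}}=H^3_{\mathrm{et}}=0$ (the surface is simply connected), $H^0_{\mathrm{et}}=\Q_\ell$ and $H^4_{\mathrm{et}}=\Q_\ell(-2)$ gives
$$
\#S(\F_p)=1+p^2+\mathrm{tr}\big(\mathrm{Frob}_p\mid H^2_{\mathrm{et}}(S_{\overline{\F}_p},\Q_\ell)\big).
$$
On the other side, fibring $S\to\PP^1$ and summing over fibres yields $\#S(\F_p)=\sum_{t\in\PP^1(\F_p)}\#(\mathrm{fibre}_t)(\F_p)$, where good fibres contribute $p+1-a_t(p)$ while each bad fibre contributes a quantity governed by its reduction type and by the number $m_v$ of its irreducible components. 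Matching the two expressions and recalling $\sum_{t=0}^{p-1}a_t(p)=p\,A_\E(p)$ isolates
$$
\mathrm{tr}\big(\mathrm{Frob}_p\mid H^2_{\mathrm{et}}\big)=-p\,A_\E(p)+2p+\mathcal C(p),
$$
where $\mathcal C(p)$ collects the contributions of the point at infinity and of the bad fibres.

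Next I would average over $p$. Because $H^2_{\mathrm{et}}$ is entirely algebraic, $\mathrm{Frob}_p$ acts on the Tate twist $H^2_{\mathrm{et}}(1)\cong \mathrm{NS}(S_{\overline{\Q}})\otimes\Q_\ell$ through a finite quotient $\Gal(K/\Q)$, where $K$ splits all divisor classes (the bad-fibre components and the sections). Thus every eigenvalue of $\mathrm{Frob}_p$ on $H^2_{\mathrm{et}}$ is $p$ times a root of unity, and $p^{-1}\mathrm{tr}(\mathrm{Frob}_p\mid H^2_{\mathrm{et}})$ is the character of a fixed finite-dimensional representation. Chebotarev's density theorem then gives
$$
\lim_{Q\to\infty}\frac{1}{\pi(Q)}\sum_{p\le Q}\frac{\mathrm{tr}(\mathrm{Frob}_p\mid H^2_{\mathrm{et}})}{p}=\dim\big(\mathrm{NS}(S_{\overline{\Q}})\otimes\Q_\ell\big)^{G_\Q}=\rank \mathrm{NS}(S/\Q),
$$
the average of a character being the multiplicity of the trivial representation; equivalently one may run a Tauberian argument on the $L$-function of $H^2_{\mathrm{et}}$, which factors into Artin $L$-functions and whose pole order at the edge equals $\rank\mathrm{NS}(S/\Q)$ by the (now unconditional) Tate conjecture. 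Combining this with the identity above and converting $1/\pi(Q)$ into $(\log Q)/Q$ via the prime number theorem expresses $-\lim (\log Q/Q)\sum_{p\le Q}A_\E(p)$ as $\rank\mathrm{NS}(S/\Q)-2$ minus the averaged contribution of $\mathcal C(p)/p$.

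The final, and most delicate, step is the bad-fibre bookkeeping, which I expect to be the main obstacle. Using the Shioda--Tate decomposition over $\Q$ (the $\Q$-rational refinement of~\eqref{rankclos}) one has $\rank\mathrm{NS}(S/\Q)=2+\sum_{v}(\tilde m_v-1)+\rank\E(\Q(T))$, where $v$ runs over the closed points of $\PP^1_\Q$ and $\tilde m_v$ counts the Galois orbits of components above $v$. The hard part will be to verify, reduction type by reduction type, that the averaged value of $\mathcal C(p)/p$ reproduces exactly the geometric term $2+\sum_v(\tilde m_v-1)$, so that these cancel and only $\rank\E(\Q(T))$ survives. This means matching, on each singular cubic, the local point-count defect (keeping track of split versus non-split multiplicative and of additive reduction, and of how $\mathrm{Frob}_p$ permutes the components) against the corresponding local Néron--Severi contribution; the averaging over $p$ via Chebotarev is precisely what converts the $\F_p$-component counts into the $\Q$-rational orbit counts $\tilde m_v$. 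Once this local comparison is in place, the stated formula for $r$ follows.
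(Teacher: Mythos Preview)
The paper does not prove this statement: it is explicitly labeled a conjecture, and immediately after stating it the paper simply records that Rosen and Silverman~\cite{rosen-silverman} proved it for rational elliptic surfaces, observing that the surfaces under consideration (with $\deg\alpha_i\le 2$) fall into this class. There is thus no proof in the paper to compare against; the result is imported as a black box.

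Your sketch is a reasonable outline of the Rosen--Silverman strategy specialized to the rational case, and the ingredients you invoke (Grothendieck--Lefschetz on $S$, vanishing of $H^1_{\mathrm{et}}$ and $H^3_{\mathrm{et}}$, algebraicity of all of $H^2_{\mathrm{et}}$ for a rational surface so that Tate's conjecture is automatic, Chebotarev averaging on the finite Galois representation $\mathrm{NS}(S_{\overline\Q})\otimes\Q_\ell$, and the $\Q$-rational Shioda--Tate formula) are the correct ones. One point worth flagging: the conjecture as displayed refers to an arbitrary $\E$ as in~\eqref{eqine}, with no degree constraint on the $\alpha_i$; you import the hypothesis $\deg\alpha_i\le i$ from the surrounding discussion in order to force rationality, so what you are sketching is not a proof of the stated conjecture but only of the special case the paper actually uses---and that special case is exactly the Rosen--Silverman theorem being cited. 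The general statement remains open.
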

The conjecture was proven by Rosen and Silverman \cite{rosen-silverman} for rational elliptic surfaces. In particular, since the hypothesis $\deg(\alpha_i)\leq2$ implies that $\E$ is in fact a rational elliptic surface (\cite{ellipticSurfaces}), then Nagao's conjecture is actually a theorem in the case we are considering.

Expressing $a_t({p})$ in terms of the Legendre symbol $(\frac \cdot\cdot)$, we can write $-A_{\E}(p)$ as 
\begin{align*}
-A_{\E}(p)&=\frac1p\sum_{x,t \mod{p}} \left(\frac{A(x) t^2 + B(x)t +C(x)}{p}\right)
\end{align*}
provided that $p$ doesn't divide the discriminant of $\E$. Summing over $t$, the contribution of the $x\mod p$ such that $A(x)\equiv 0$ is easily seen to be $(\frac{C(x)}{p})$ if also $B(x)\equiv 0$ and it is zero otherwise. If $A(x)\nequiv 0\mod p$ then the sum over $t$ is a complete quadratic Legendre sum which can be evaluated exactly (see e.g.~\cite[Theorem 5.48]{finite_fields}). These two cases together give 
\begin{align*}
-A_{\E}(p) &=\sum_{\substack{x \mod p \\ A(x)\equiv B(x) \equiv 0 \mod p}} \leg{C(x)}{p}+ \sum_{\substack{x \mod p \\ (B^2-4AC)(x) \equiv 0 \mod
p}} \leg{A(x)}{p} - \frac{1}{p} \sum_{\substack{x \mod p }} \leg{A(x)}{p}.
\end{align*}
By Weil's bound the last sum is $-1$ if the polynomial $A(X)$ 
is a non-zero square in $\Q[X]$ and it is $O(p^{-1/2})$ otherwise. 
Thus, defining\footnote{If $P_1$ or $P_2$ are not defined mod $p$ we simply let $S_{P_1,P_2}(p):=0$. The actual choice is irrelevant since we are concerned only with the case of large $p$.}
\est{
S_{P_1,P_2}(p):=\sum_{\substack{x \mod p \\ P_1(x) \equiv 0 \mod p}} \leg{P_2(x)}{p}
} 
for two polynomials $P_1(X),P_2(X)\in\Q[X]$ and a sufficiently large prime $p$, we obtain 
\es{\label{msfr}
r=\lim_{Q\to\infty}\frac{\log Q}Q\sum_{p\leq Q}(S_{\gcd(A,B),C}(p)+S_{B^2-4AC,A}(p))-\square(A)
}
(recall that by definition $\square(0)=0$) which will be the starting point of our analysis. In the following subsections we shall determine the average behaviour of $S_{P_1,P_2}(p)$ in the relevant cases. Mostly, our computations will be based upon consequences of Chebotarev's density theorem, as given in the following two lemmas.

\begin{defin}\label{dfa2}
Let $k\in\N$, $p$ a prime and let $P\in \Q[X]\setminus\{0\}$. Then, if $p$ divides $P$ or a denominator of the coefficients of $P$ we let $N_{P,k}(p):=0$, otherwise we define $N_{P,k}(p)$ to be the number of degree $k$ irreducible factors of $P(X)$ in $\F_p[X]$ counted with multiplicity. Notice that $N_P(p):=N_{P,1}(p)$ is the number of zeros  (counted with multiplicity) of $P(X)$ modulo $p$.
\end{defin}
\begin{lemma}\label{omega}
For a non-zero polynomial $P\in\Q[X]$ we have
\begin{equation}\label{limitomega}
\lim_{Q \rightarrow \infty} \frac{\log Q}{Q} \sum_{p \leq Q} N_P(p) = \Omega(P).
\end{equation}
\end{lemma}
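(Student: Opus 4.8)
The plan is to reduce to an irreducible polynomial and then feed the shape of $N_P(p)$ into Chebotarev's density theorem. First I would write $P = c\prod_{i=1}^{s}P_i^{e_i}$ with $c\in\Q^{\times}$ and the $P_i$ distinct monic irreducibles in $\Q[X]$, so that $\Omega(P)=\sum_{i}e_i=\sum_i e_i\Omega(P_i)$. For all but finitely many primes $p$ the constant $c$ is a $p$-adic unit and the reductions $\overline{P_i}\in\F_p[X]$ are separable and pairwise coprime; for such $p$ a zero of $P$ modulo $p$ coming from $P_i$ has multiplicity exactly $e_i$, whence $N_P(p)=\sum_{i=1}^{s}e_i N_{P_i}(p)$. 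Since $N_P(p)\le\deg P$ for every $p$ and the excluded set is finite, those primes contribute $O(\log Q/Q)\to 0$ to the left-hand side of~\eqref{limitomega}, so by linearity it suffices to establish the lemma for $P$ irreducible, i.e.\ to show $\frac{\log Q}{Q}\sum_{p\le Q}N_P(p)\to 1$.

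So let $P$ be irreducible of degree $d$, let $K=\Q(\theta)$ for a root $\theta$ of $P$, let $L$ be the Galois closure of $K/\Q$, and set $G=\Gal(L/\Q)$, $H=\Gal(L/K)$, so that $G$ acts transitively on the coset space $G/H$, which we identify with the $d$ roots of $P$ in $L$. For every prime $p$ unramified in $L$ and not dividing the index $[\mathcal O_K:\Z[\theta]]$ (a cofinite set of primes), Dedekind's factorization theorem says the degrees of the irreducible factors of $\overline P$ in $\F_p[X]$ coincide with the cycle lengths of a Frobenius element $\mathrm{Frob}_p$ acting on $G/H$; in particular $N_P(p)$ equals the number of fixed points of $\mathrm{Frob}_p$ on $G/H$. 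Writing $\mathrm{fix}(C)$ for the number of fixed points of (any element of) a conjugacy class $C\subseteq G$, Chebotarev's density theorem gives that $\{p:\mathrm{Frob}_p\in C\}$ has density $|C|/|G|$ among the primes, and since $N_P(p)$ is a bounded class function of $\mathrm{Frob}_p$ we obtain
\[
\lim_{Q\to\infty}\frac{1}{\pi(Q)}\sum_{p\le Q}N_P(p)=\sum_{C}\frac{|C|}{|G|}\,\mathrm{fix}(C)=\frac{1}{|G|}\sum_{g\in G}\#\{xH\in G/H:gxH=xH\}.
\]
By Burnside's orbit-counting lemma the right-hand side is the number of $G$-orbits on $G/H$, which equals $1$ by transitivity.

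Finally, multiplying by $\pi(Q)\log Q/Q\to 1$ (the prime number theorem) converts the normalization $1/\pi(Q)$ into $\log Q/Q$ and yields $\lim_{Q\to\infty}\frac{\log Q}{Q}\sum_{p\le Q}N_P(p)=1=\Omega(P)$. Essentially every step here is bookkeeping — the reduction to irreducible $P$, discarding the finitely many bad primes, switching between the two normalizations; the only substantive input is Chebotarev's theorem, and the one spot that needs a moment's care is the passage from ``zeros of $\overline P$ counted with multiplicity'' to ``fixed points of Frobenius'', which is why we restrict to the cofinite set of $p$ for which $\overline P$ is separable on each irreducible piece. (Alternatively one could bypass Burnside by noting that $\sum_{p\le Q}N_P(p)$ counts the degree-one prime ideals of $\mathcal O_K$ of norm $\le Q$ and invoking the prime ideal theorem for $K$, the contribution of higher-degree prime ideals being $O(\sqrt Q/\log Q)$.)
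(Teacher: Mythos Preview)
Your proof is correct. The reduction to irreducible $P$ matches the paper's, and your main argument via Chebotarev and Burnside's lemma is sound: identifying $N_P(p)$ with the number of fixed points of a Frobenius element on $G/H$ and averaging over conjugacy classes gives exactly the number of $G$-orbits, which is $1$ by transitivity.

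The paper, however, takes the more direct route you mention only parenthetically at the end: it observes that for $P$ irreducible and $p$ large, $N_P(p)$ is precisely the number of prime ideals of $\mathcal O_K$ above $p$ with inertia degree $1$, so $\sum_{p\le Q}N_P(p)$ counts degree-one prime ideals of norm at most $Q$, and the Prime Ideal Theorem gives the limit $1$ immediately. Your approach packages the same analytic input (Chebotarev specializes to the Prime Ideal Theorem) through more group-theoretic bookkeeping --- passing to the Galois closure, invoking the cycle-type interpretation of Frobenius, and then Burnside --- which is perfectly valid but heavier than necessary here. The paper's route avoids the Galois closure entirely and needs only the statement that degree-one primes in $K$ have density $1$ among all prime ideals. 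What your approach buys is a template that generalizes more readily (e.g.\ to weighted counts by cycle type), but for this lemma the Prime Ideal Theorem is the shortest path.
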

\begin{proof}
It is enough to show this for $P$ irreducible, since $N_{P_1\cdot P_2}(p) = N_{P_1}(p)+N_{P_2}(p)$ for $P_1,P_2\in \Q[X]$. If $P$ is irreducible, let $K:= \Q[X]/P(X)$ be its associated number field and let $\mathcal{O}_K\subset K$ be the corresponding ring of integers: then 
$$
\lim_{Q \rightarrow \infty} \frac{\log Q}{Q} \sum_{p \leq Q} N_P(p) = 
\lim_{Q \rightarrow \infty} \frac{\log Q}{Q}\sum_{\substack{\mathfrak{p}\subset\mathcal{O}_K\\\mathfrak{p}\text{ prime, }\mathcal{N}(\mathfrak{p})\leq Q\\ \mathfrak{p}\text{ has inertia degree 1}}}1
$$
where $\mathcal{N}(\mathfrak{p})$ is the absolute norm of $\mathfrak{p}$. Then the claim follows from the Prime Ideal Theorem, see e.g.~\cite[Chapter VIII.2]{cassels1967algebraic}.  
\end{proof}

\begin{lemma}\label{densityt} 
Let $k\in\Q$ and let $P(X)\in\Q[X]$ be an irreducible polynomial. Then, 
\begin{equation}\label{limitsigma}
    \lim_{Q\rightarrow\infty} \frac{\log Q}{Q}\sum_{p\leq Q} N_P(p)\leg{k}{p} = \sigma(k,P).
\end{equation}
\end{lemma}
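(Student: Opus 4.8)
The plan is to deduce this from Lemma~\ref{omega} by relating the twisted count $\sum_{p\le Q}N_P(p)\leg kp$ to an untwisted root count for an auxiliary polynomial. Write $K:=\Q[X]/P(X)$ for the number field attached to the irreducible polynomial $P$. For all but finitely many primes $p$ the reduction of $P$ modulo $p$ is separable and $p$ is unramified in $K$, so $N_P(p)$ equals the number of primes $\mathfrak p\subset\mathcal O_K$ above $p$ of residue degree $1$; for such a $\mathfrak p$ one has $\mathcal O_K/\mathfrak p\cong\F_p$, and hence $k$ is a square in the residue field exactly when $\leg kp=1$ (we also discard the finitely many $p$ dividing the numerator or denominator of $k$). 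Since $\leg kp$ depends only on $p$, the issue is simply whether the degree-one primes of $K$ above $p$ see $k$ as a square, which is governed by the behaviour of $k$ in $K$, so I would split into two cases.

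If $k$ is a square in $K$, say $k=c^2$ with $c\in K$, then for every large $p$ and every degree-one prime $\mathfrak p$ above $p$ the element $k$ reduces to a square in $\F_p$, so $\leg kp=1$ whenever $N_P(p)>0$; since also $N_P(p)\leg kp=0=N_P(p)$ when $N_P(p)=0$, we obtain $N_P(p)\leg kp=N_P(p)$ for all large $p$. Lemma~\ref{omega} then gives the limit $\Omega(P)=1$, which equals $\sigma(k,P)$ in this case since $P$ is irreducible and $k\in(K^\times)^2$.

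If $k$ is not a square in $K$, set $L:=K(\sqrt k)$, so that $[L:K]=2$ and $[L:\Q]=2\deg P$; choose a primitive element of $L/\Q$ and let $R\in\Q[X]$ be its minimal polynomial, irreducible of degree $2\deg P$. For all but finitely many $p$ --- unramified in $L$ and avoiding the relevant discriminants and denominators --- a degree-one prime $\mathfrak p$ of $K$ above $p$ splits in $L$ precisely when $k$ is a square modulo $\mathfrak p$ and is inert otherwise; moreover, by multiplicativity of residue degrees in the tower $L/K/\Q$, every degree-one prime of $L$ above $p$ lies over such a split $\mathfrak p$, and each split $\mathfrak p$ is dominated by exactly two of them. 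Hence $N_R(p)=2\cdot\#\{\mathfrak p\mid p:\ \text{residue degree }1,\ \leg kp=1\}$, and checking the two possibilities $\leg kp=\pm1$ separately (recall $\leg kp$ is constant over $\mathfrak p\mid p$) shows $N_R(p)-N_P(p)=N_P(p)\leg kp$ for all large $p$. Applying Lemma~\ref{omega} to $R$ and to $P$ yields the limit $\Omega(R)-\Omega(P)=1-1=0=\sigma(k,P)$, since $R$ and $P$ are irreducible.

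The routine parts are the two appeals to Lemma~\ref{omega} and the fact that excluding finitely many primes does not alter the limit (each excluded $p$ contributes $O((\log Q)/Q)$). The point deserving care is the bookkeeping in the second case: one must check that the Dedekind--Kummer theorem legitimately identifies $N_R(p)$ with the count of degree-one primes of $L$ for all but finitely many $p$, and that the degree-one primes of $L$ are matched exactly, two-to-one, with the split degree-one primes of $K$, with no contribution from primes of $K$ of higher residue degree or from ramification.
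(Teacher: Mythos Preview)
Your proof is correct. The first case (when $k$ is a square in $K$) is essentially identical to the paper's argument. In the second case you take a genuinely different route: the paper splits the sum according to the sign of $\leg kp$, identifies the primes with $\leg kp=1$ as those whose degree-one primes in $K$ split in $K(\sqrt k)$, and invokes Chebotarev's theorem for the quadratic extension $K(\sqrt k)/K$ to obtain density $1/2$ for each piece. You instead introduce the auxiliary irreducible polynomial $R$ attached to a primitive element of $L=K(\sqrt k)$ and prove the pointwise identity $N_R(p)-N_P(p)=N_P(p)\leg kp$ for all large $p$, so that two applications of Lemma~\ref{omega} (to $R$ and to $P$) finish the job. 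Your approach has the pleasant feature of staying entirely within the framework already set up (only Lemma~\ref{omega} is needed, no separate appeal to Chebotarev), at the cost of the tower bookkeeping you flag at the end; the paper's approach is perhaps more transparent conceptually but requires the extra input. Both are equally rigorous once the finitely many bad primes are discarded.
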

\begin{proof}
Let $K:= \Q[X]/P(X)$:  notice that $k$ is a square in $K$ if and only if $\Q(\sqrt{k})\subseteq K$.
\\
Assume $\Q(\sqrt{k})\subseteq K$. Up to finitely many exceptions, for a prime number $p$ we have $N_P(p)=r>0$ if and only if there exist $r$ primes $\mathfrak{p}\subset\mathcal{O}_K$ dividing $p$ with inertia 1 over $\Q$: by multiplicativity of inertia degrees, every prime $p$ such that $N_P(p)>0$ splits in $\Q(\sqrt{k})$, i.e. $\left(\frac{k}{p}\right) = 1.$ Thus,
\begin{align*}
 \lim_{Q\rightarrow\infty} \frac{\log Q}{Q}\sum_{p\leq Q} N_P(p)\leg{k}{p}=
     \lim_{Q\rightarrow\infty} \frac{\log Q}{Q}\sum_{p\leq Q} N_P(p) = 1
\end{align*}
thanks to \eqref{limitomega}.

Assume now $K\cap \Q(\sqrt{k})=\Q$: the composite field is $K(\sqrt{k}) \simeq K[X]/(X^2-k)$, which is a quadratic (hence Galois) extension of $K$. Consider
\begin{equation}\label{limitintermediate}
    \lim_{Q\to\infty}\frac{\log Q}{Q}\sum_{\substack{p\leq Q\\\left(\frac{k}{p}\right)=1}}N_P(p)\left(\frac{k}{p}\right) = 
    \lim_{Q\to\infty}\frac{\log Q}{Q}\sum_{\substack{p\leq Q\\\left(\frac{k}{p}\right)=1}}N_P(p).
\end{equation}
If we are able to prove that this limit is equal to 1/2, then, since
\begin{align*}
    1 = \lim_{Q\to\infty}\frac{\log Q}{Q}\sum_{\substack{p\leq Q}}N_P(p) = 
    \lim_{Q\to\infty}\frac{\log Q}{Q}\sum_{\substack{p\leq Q\\\left(\frac{k}{p}\right)=1}}N_P(p) + \lim_{Q\to\infty}\frac{\log Q}{Q}\sum_{\substack{p\leq Q\\\left(\frac{k}{p}\right)=-1}}N_P(p)
\end{align*}
we obtain that also the third limit is equal to $1/2$, implying that the left hand side of \eqref{limitsigma} is equal to 0. 
The left hand side of \eqref{limitintermediate} may be rewritten as
\begin{equation*}
    \lim_{Q\to\infty}\frac{\log Q}{Q}\sum_{\substack{\mathfrak{p}\subset\mathcal{O}_K\\\mathfrak{p}\text{ has inertia }1\\\mathcal{N}(\mathfrak{p})\leq Q, \left(\frac{k}{\mathfrak{p}\cap\Z}\right)=1}}1.
\end{equation*}
But now, since $\mathfrak{p}$ has inertia 1 over $\Q$, we have $\mathcal{O}_K/\mathfrak{p} \simeq \mathbb{F}_{\mathfrak{p}\cap\Z}$ and in particular the polynomial $X^2-k$ splits in $(\mathcal{O}_K/\mathfrak{p})[X]$. This means that, up to finitely many exceptions, $\mathfrak{p}$ splits in the quadratic extension $K(\sqrt{k})/K$ and thus
\begin{equation*}
    \lim_{Q\to\infty}\frac{\log Q}{Q}\sum_{\substack{\mathfrak{p}\subset\mathcal{O}_K\\\mathfrak{p}\text{ has inertia }1\\\mathcal{N}(\mathfrak{p})\leq Q, \left(\frac{k}{\mathfrak{p}\cap\Z}\right)=1}}1=
    \lim_{Q\to\infty}\frac{\log Q}{Q}\sum_{\substack{\mathfrak{p}\subset\mathcal{O}_K\\\mathfrak{p}\text{ has inertia }1\\\mathcal{N}(\mathfrak{p})\leq Q\\ \mathfrak{p}\text{ splits in }K(\sqrt{k})}}1.
\end{equation*}
By the Prime Ideal Theorem this limit is equal to
$$
\lim_{Q\to\infty}\frac{\log Q}{Q}\sum_{\substack{\mathfrak{p}\subset\mathcal{O}_k\\\mathcal{N}(\mathfrak{p})\leq Q\\ \mathfrak{p}\text{ splits in }K(\sqrt{k})}}1
$$
and the result  is $1/2$ by Chebotarev's Theorem.
\end{proof}

\subsection{The average value of $S_{P_1,P_2}(p)$}\label{section3}
We first record some easy properties of $S$ and some first easy cases following from the definition of $S$ or from Lemma~\ref{omega} and Lemma~\ref{densityt}.
\begin{prop}\label{rp1}
Let $P_1,P_2\in\Q[X]$, with $P_1\neq0$. Then for $p$ large enough we have
\est{
S_{P_1,P_2}(p)=\sum_{F|P_1,\atop F\text{irr.}}S_{F,P_2}.
}
\end{prop}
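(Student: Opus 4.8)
The plan is to pass to the zero set of $P_1$ modulo $p$ and to show that, for all large $p$, it decomposes as a \emph{disjoint} union over the distinct monic irreducible divisors of $P_1$ in $\Q[X]$. Write $P_1=c\prod_{F\mid P_1,\ F\text{ irr.}}F^{e_F}$ with $c\in\Q^\times$ and integers $e_F\geq 1$, the product running over the monic irreducible factors $F$ of $P_1$. For all but finitely many $p$ (discard those dividing a numerator or denominator of $c$, or a denominator of a coefficient of some $F$), the reduction $\overline{P_1}\in\F_p[X]$ equals $\bar c\prod_F \overline F^{\,e_F}$ with $\bar c\neq 0$ and $\deg\overline F=\deg F$; hence a residue $x\bmod p$ satisfies $P_1(x)\equiv 0\bmod p$ if and only if $\overline F(x)=0$ in $\F_p$ for at least one such $F$.

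Next I would check that for large $p$ the zero sets of the $\overline F$ are pairwise disjoint. If $F\neq F'$ are distinct monic irreducibles dividing $P_1$ then they are coprime in the principal ideal domain $\Q[X]$, so $UF+VF'=1$ for some $U,V\in\Q[X]$; after discarding the finitely many primes occurring in the denominators of $U,V$, this identity survives in $\F_p[X]$, so $\overline F$ and $\overline{F'}$ have no common root in $\F_p$ (equivalently, $\Res(F,F')\in\Q^\times$ is nonzero mod $p$). Therefore, for $p$ large,
\[
\{\,x\bmod p:\ P_1(x)\equiv 0\,\} \;=\; \bigsqcup_{F\mid P_1,\ F\text{ irr.}}\{\,x\bmod p:\ F(x)\equiv 0\,\},
\]
a disjoint union. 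Splitting the defining sum of $S_{P_1,P_2}(p)$ along this decomposition gives
\[
S_{P_1,P_2}(p)=\sum_{\substack{x\bmod p\\ P_1(x)\equiv 0}}\leg{P_2(x)}{p}=\sum_{F\mid P_1,\ F\text{ irr.}}\ \sum_{\substack{x\bmod p\\ F(x)\equiv 0}}\leg{P_2(x)}{p}=\sum_{F\mid P_1,\ F\text{ irr.}}S_{F,P_2}(p),
\]
which is the assertion; note that the exponents $e_F$ are immaterial because the sum in the definition of $S$ runs over residues and is not weighted by multiplicity, and that $P_2$ need not be coprime to $P_1$.

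There is no serious obstacle here — this is the Legendre-twisted analogue of the additivity $N_{P_1P_2}(p)=N_{P_1}(p)+N_{P_2}(p)$ already used in the proof of Lemma~\ref{omega}. The only point that genuinely requires care is the disjointness of the reductions of two distinct irreducible factors, which is just Bézout's identity in $\Q[X]$ read modulo all but finitely many primes; everything else is bookkeeping about which finitely many $p$ to exclude, and in any case the convention $S_{P_1,P_2}(p):=0$ when a reduction is undefined covers the remaining small primes so that no extra hypothesis on $p$ is needed beyond ``large enough''.
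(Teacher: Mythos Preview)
Your argument is correct and follows exactly the approach of the paper: the paper's proof is the one-line remark that coprime polynomials have distinct roots modulo $p$ for $p$ sufficiently large, and you have simply spelled out this fact (via B\'ezout in $\Q[X]$) together with the bookkeeping on small primes. There is nothing to add.
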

\begin{proof}
This is immediate from the definition since coprime polynomials have different roots mod $p$ if $p$ is sufficiently large.
\end{proof}

\begin{prop}\label{rp2}
Let $P_1,P_2\in\Q[X]$, with $P_1\neq0$. Then if $P_1|P_2$ one has $S_{P_1,P_2}(p)=0$. Moreover, if $P_1$ is a linear polynomial with root $x_1$, then 
\est{
\lim_{Q\to\infty}\frac{\log Q}Q\sum_{p\leq Q} S_{P_1,P_2}(p)=\square(P_2(x_1))
}
\end{prop}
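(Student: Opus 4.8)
The plan is to handle the two assertions separately; both follow quickly from the definition of $S_{P_1,P_2}$, the second one also using Lemma~\ref{densityt}.

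For the first assertion, write $P_2=P_1Q$ with $Q\in\Q[X]$. For $p$ large enough (so that $p$ divides no denominator of a coefficient of $P_1$, $Q$ or $P_2$, and $\overline{P_1}\neq0$), reduction modulo $p$ is multiplicative on polynomials, whence $\overline{P_2}=\overline{P_1}\,\overline{Q}$ in $\F_p[X]$. Thus every $x\bmod p$ with $P_1(x)\equiv0\pmod p$ also satisfies $P_2(x)\equiv0\pmod p$, so the corresponding term $\leg{P_2(x)}{p}=\leg{0}{p}=0$; summing over the roots of $P_1$ modulo $p$ gives $S_{P_1,P_2}(p)=0$ for all sufficiently large $p$.

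For the second assertion, let $P_1$ be linear with rational root $x_1$. By Proposition~\ref{rp1}, for large $p$ one has $S_{P_1,P_2}(p)=S_{X-x_1,P_2}(p)$, the polynomial $X-x_1$ being the only monic irreducible factor of $P_1$; and once $p$ does not divide the denominator of $x_1$, the reduction of $X-x_1$ has the single simple root $x_1\bmod p$ in $\F_p$, so $S_{X-x_1,P_2}(p)=\leg{P_2(x_1)}{p}$ straight from the definition. Set $k:=P_2(x_1)\in\Q$. If $k=0$ every term vanishes and the limit equals $0=\square(0)$. If $k\neq0$, I would apply Lemma~\ref{densityt} to the irreducible linear polynomial $P(X):=X-x_1$, for which $N_P(p)=1$ for all large $p$; this gives
\[
\lim_{Q\to\infty}\frac{\log Q}{Q}\sum_{p\leq Q}\leg{P_2(x_1)}{p}=\lim_{Q\to\infty}\frac{\log Q}{Q}\sum_{p\leq Q}N_P(p)\leg{k}{p}=\sigma(k,X-x_1).
\]
Since $\Q[X]/(X-x_1)\cong\Q$, the number $k$ is a square in this quotient exactly when $k\in(\Q^\times)^2$, i.e. when the constant polynomial $k$ is a non-zero square in $\Q[X]$; hence $\sigma(k,X-x_1)=\square(k)$, which is the claimed value.

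I do not expect a genuine obstacle: the proof is essentially bookkeeping. What needs a little care is the uniform passage to large $p$ (so that root sets reduce correctly and the finitely many exceptional primes do not influence the limit), the identification of $\square$ evaluated on the constant $k$ with the condition "$k$ is a rational square", and isolating the degenerate case $k=0$, which is excluded from the definition of $\sigma$ but is trivial here.
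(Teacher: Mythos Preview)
Your proof is correct and matches the paper's approach: the first claim is the vanishing of the Legendre symbol at $0$, and the second reduces to $S_{P_1,P_2}(p)=\leg{P_2(x_1)}{p}$ for large $p$. The only cosmetic difference is that the paper invokes the prime number theorem in arithmetic progressions directly to evaluate the resulting average of $\leg{k}{p}$, whereas you route through Lemma~\ref{densityt} with the linear polynomial $X-x_1$; in degree one that lemma is precisely the same statement, so the two arguments coincide.
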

\begin{proof}
The first claim is clear. The second follows by the prime number theorem in arithmetic progressions since by definition $ S_{P_1,P_2}(p)=(\frac {P_2(x_1)}p)$.
\end{proof}
\begin{prop}\label{rp3}
Let $P_1\in\Q[X]$ and let $k\in\Q$. Then
\est{
\lim_{Q\to\infty}\frac{\log Q}Q\sum_{p\leq Q} S_{P,k}(p)=
\begin{cases}
0 & \text{if $k=0$},\\
\omega(P_1) & \text{if $k\in\Q^2\setminus\{0\}$},\\
\sigma(k,P_1)& \text{if $k\in\Q\setminus\Q^2$}.\\
\end{cases}
}

\end{prop}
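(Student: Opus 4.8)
The plan is to reduce the statement to the two density lemmas already available. Since the second argument is the constant $k$, the Legendre symbol $\leg{k}{p}$ pulls out of the sum defining $S_{P_1,k}(p)$: for every prime $p$ not dividing a denominator of $k$ we have
\[
S_{P_1,k}(p) = \leg{k}{p}\cdot\#\{x\bmod p:\ P_1(x)\equiv 0\}.
\]
If $P_1$ is a nonzero constant the claim is trivial (both sides vanish), so assume $\deg P_1\geq 1$. By Proposition~\ref{rp1} it suffices to handle $P_1 = F$ irreducible monic and then sum the contributions over the distinct monic irreducible factors of $P_1$. For $F$ irreducible, $\gcd(F,F')=1$ in $\Q[X]$, so $F$ reduces to a separable polynomial modulo all but finitely many $p$; hence for $p$ large the number of $x\bmod p$ with $F(x)\equiv 0$ is exactly $N_F(p)$, and $S_{F,k}(p)=\leg{k}{p}N_F(p)$.

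Then I would split into the three cases. If $k=0$, then $\leg{0}{p}=0$ and $S_{P_1,0}(p)=0$ for all $p$, so the limit is $0$. If $k\in\Q^2\setminus\{0\}$, write $k=a^2$ with $a\in\Q^\times$; for every $p$ not dividing the numerator or denominator of $a$ one has $\leg{k}{p}=1$, hence $S_{F,k}(p)=N_F(p)$ for $p$ large, and Lemma~\ref{omega} gives limit $\Omega(F)=1$ for each irreducible $F$; summing over the $\omega(P_1)$ distinct irreducible factors of $P_1$ gives $\omega(P_1)$. If $k\in\Q\setminus\Q^2$, then $S_{F,k}(p)=N_F(p)\leg{k}{p}$ for $p$ large, so Lemma~\ref{densityt} gives limit $\sigma(k,F)$ for each irreducible $F$; summing over $F\mid P_1$ yields $\sum_{F\mid P_1,\,F\text{ irr.}}\sigma(k,F)$, which is the definition of $\sigma(k,P_1)$.

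There is essentially no obstacle: the proposition is a repackaging of Lemmas~\ref{omega} and~\ref{densityt} once one observes that a constant second argument makes the Legendre symbol independent of the summation variable. The only points needing care are that the finitely many excluded primes (those dividing a denominator of $P_1$ or of $k$, or where $F$ fails to be separable) do not affect the limit, and the harmless distinction between counting roots with and without multiplicity, which is removed by passing to irreducible factors through Proposition~\ref{rp1}.
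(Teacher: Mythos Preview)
Your argument is correct and is precisely the approach the paper takes: reduce to irreducible factors via Proposition~\ref{rp1}, then invoke Lemma~\ref{omega} when $k$ is a nonzero square and Lemma~\ref{densityt} when $k$ is a nonsquare (with the case $k=0$ trivial). The paper's proof is in fact just a one-line citation of these three results, so your write-up is simply a fleshed-out version of the same reasoning.
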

\begin{proof}
This follows from Proposition~\ref{rp1}, Lemma~\ref{omega} and Lemma~\ref{densityt}.
\end{proof}

If $\deg(P_2)\geq1$ we need a more elaborate computation, which will lead to a result depending on the resultant $M_{P_1,P_2}$ (defined in~\eqref{dfrs}). We let 
$\delta_x:=1$ if $x=0$ and $\delta_x:=0$ otherwise and we define $\Upsilon_{P_1,P_2}$ as follows.

\begin{defin}
Let $P_1,P_2\in\Q[X]$ with $P_1$ irreducible and of degree less than or equal to $5$ and $P_2$ non constant and with leading coefficient $s$. Also let 
 $\min(\deg(P_1),\deg(P_2))\leq2$. Let $\D_{P_1}$ and $\D_{P_2}$ be the discriminant of ${P_1}$ and $P_2$ respectively.
Then, we define
\es{\label{dfups}
\Upsilon_{P_1,P_2}:=\begin{cases}
\delta_{t}(1-\delta_u+\square(u)-\square(uD_{P_1}))& \parbox[t]{.55\textwidth}{if $\deg(P_1)=2$ and $P_2(X)=Q(X)P_1(X) + (tX+u)$ with $t,u\in\Q$, $0\neq Q\in\Q[X]$;}\medskip\\
\parbox[c]{.3\textwidth}{ $\Upsilon_{U,W}+2-\Omega(M_{U,W})+{ + }{}\Omega(U(x^2))-\Omega(U(\tfrac{x^2-\D_{P_2}}{4s}))$}
& \parbox[c]{.55\textwidth}{if $\deg(P_1)=4$, $\deg(P_2)=2$ and \\$P_1(X)=U(P_2(X))$ for some $U\in\Q[X]$ and where\\ $W(X):=4sX^2+\D_{P_2} X$;}\medskip\\
0 & \text{otherwise.}
\end{cases}
}
If $P$ is not irreducible, but each of its irreducible factor $F$ satisfies the above conditions we define
\est{
\Upsilon_{P,P_2}=\sum_{F|P,\atop F\text{ irr.}}\Upsilon_{F,P_2}.
}
\end{defin}

\begin{prop}\label{rp4}
Let $P_1,P_2\in\Q[X]\setminus\{0\}$ with $P_2$ non constant and with $\min(\deg(F),\deg(P_2))\leq2$ and $\deg(F)\leq 5$ for each irreducible factor $F$ of $P_1$. Then,
\est{
\lim_{Q\to\infty}\frac{\log Q}Q\sum_{p\leq Q} S_{P_1,P_2}(p)&=\sum_{F|P_1,\atop F\text{ irr.}}\big(\Omega(M_{F,P_2})+\Omega(\gcd(F,P_2))-2\deg(\gcd(F,P_2))-\Upsilon_{F,P_2}-1\big)\\
&=\Omega(M_{P_1^*,P_2})+\omega(\gcd(P_1,P_2))-2\deg(\gcd(P_1^*,P_2))-\Upsilon_{P_1,P_2}-\omega(P_1),
}
where we recall that $P^*$ denotes the product of the irreducible factors dividing $P$.
\end{prop}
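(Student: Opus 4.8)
The plan is to reduce everything to the case of $P_1$ irreducible via Proposition~\ref{rp1} (which gives $S_{P_1,P_2}(p) = \sum_{F \mid P_1,\, F \text{ irr.}} S_{F,P_2}(p)$ for large $p$), so that it suffices to establish the first displayed formula with $P_1 = F$ irreducible; the second line then follows by summing, since $\sum_F \Omega(M_{F,P_2}) = \Omega(M_{P_1^*,P_2})$ (the resultant is multiplicative in $P_1$), $\sum_F \Omega(\gcd(F,P_2)) = \sum_F \deg(\gcd(F,P_2)) = \omega(\gcd(P_1,P_2))$ because distinct irreducible factors are coprime and $\gcd(F,P_2)$ is either $1$ or $F$, and likewise for the degree and $\Upsilon$ terms and $\sum_F 1 = \omega(P_1)$. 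So fix $F$ irreducible of degree $\le 5$ with $\min(\deg F,\deg P_2)\le 2$.

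Next I would split off the case $F \mid P_2$, where $S_{F,P_2}(p)=0$ by Proposition~\ref{rp2}: here $\gcd(F,P_2)=F$, $M_{F,P_2} = \Res_Y(F(Y), X^2 - P_2(Y))$ is (a power of $X$ times a unit, in fact $X^{2\deg F}$ up to constants since $P_2(Y)\equiv 0$ on the roots of $F$), so $\Omega(M_{F,P_2}) = 2\deg F$, $\Omega(\gcd(F,P_2)) = \deg F$, $2\deg(\gcd(F,P_2)) = 2\deg F$, and $\Upsilon_{F,P_2}=0$; the claimed right-hand side is $2\deg F + \deg F - 2\deg F - 0 - 1 = \deg F - 1$. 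Hmm — this does not vanish, so I would instead be more careful: when $F \mid P_2$ the resultant is $\pm(\text{lc }F)^{2}\prod_{F(\beta)=0}(X^2 - P_2(\beta)) = \pm c\, X^{2\deg F}$ only if no cancellation, but one must track that $X^2 - P_2(\beta) = X^2$ exactly, giving $M_{F,P_2} = c X^{2\deg F}$ and $\Omega = 2\deg F$; then the formula reads $2\deg F + \deg F - 2\deg F - 1 = \deg F - 1 \ne 0$ in general, which is wrong, so the correct reading must be that when $F \mid P_2$ one is in the ``otherwise'' $\Upsilon = 0$ branch but the $\gcd$ cancellations conspire differently — this boundary bookkeeping is exactly the sort of thing I would need to verify termwise and is a likely source of error. (I suspect the intended statement implicitly assumes $F \nmid P_2$ in the main term and the $F\mid P_2$ contribution is absorbed separately; I would check against Proposition~\ref{rp2} directly.)

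For the generic case $F \nmid P_2$, the key identity is $S_{F,P_2}(p) = \sum_{F(x)\equiv 0 \bmod p}\leg{P_2(x)}{p}$, and the standard trick is to count, for each root $x$ of $F$ mod $p$, whether $P_2(x)$ is a QR. Write $K = \Q[X]/F(X)$ and let $\beta$ be the image of $X$; then, up to finitely many $p$, the roots of $F$ mod $p$ correspond to degree-one primes $\mathfrak p$ of $\mathcal O_K$ over $p$, and $\leg{P_2(x)}{p} = \leg{P_2(\beta) \bmod \mathfrak p}{p}$. So $\sum_{p\le Q}S_{F,P_2}(p)$ counts degree-one primes $\mathfrak p$ of $K$ weighted by whether $P_2(\beta)$ is a square mod $\mathfrak p$, i.e. (Chebotarev applied to $K(\sqrt{P_2(\beta)})/K$, exactly as in Lemma~\ref{densityt}) the average is $\tfrac12 \cdot(\text{number of degree-one primes}) \cdot(\text{correction})$. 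Concretely, $P_2(\beta)$ is a square in $\mathfrak p$ for ``half'' the degree-one primes unless $P_2(\beta)$ is already a square in $K$, in which case it is always a square. Translating via the resultant: $M_{F,P_2}(X) = c\prod_{F(\beta)=0}(X^2 - P_2(\beta))$, whose splitting field over $\Q$ is $\Q(\{\beta\}, \{\sqrt{P_2(\beta)}\})$, and $\Omega(M_{F,P_2})$ counts (with multiplicity) the linear factors over $\Q$, equivalently the degree-one primes in the ring governing this polynomial — and $\lim \frac{\log Q}{Q}\sum N_{M_{F,P_2}}(p) = \Omega(M_{F,P_2})$ by Lemma~\ref{omega}. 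The relation $N_{M_{F,P_2}}(p) = \sum_{F(x)\equiv 0}\big(1 + \leg{P_2(x)}{p}\big) + (\text{correction when }X^2 - P_2(\beta)\text{ has a double root, i.e. }P_2(\beta)=0)$ gives, after averaging,
\begin{equation*}
\Omega(M_{F,P_2}) = \deg F + \lim \frac{\log Q}{Q}\sum_{p\le Q} S_{F,P_2}(p) + (\text{correction terms}),
\end{equation*}
and the correction terms — accounting for roots $\beta$ of $F$ with $P_2(\beta) = 0$ (handled by the $\gcd(F,P_2)$ terms: over $\mathbf F_p$, $x$ a common root contributes $1$ to $N_{M}$ but $0$ to $2\cdot\#\{\text{roots}\}$, hence the $-2\deg\gcd$), for the leading coefficient $s$ of $P_2$ being or not a square, and for the low-degree coincidences where $F(X) = U(P_2(X))$ or $P_2 = QF + (tX+u)$ create extra factorizations of $M_{F,P_2}$ not visible from the naive count — are precisely what $\Upsilon_{F,P_2}$ is engineered to encode.

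The main obstacle, and the bulk of the real work, is therefore the explicit computation of the splitting behavior of $M_{F,P_2}(X) = \Res_Y(F(Y), X^2 - P_2(Y))$ modulo $p$ in the degenerate regimes captured by $\Upsilon$: when $\deg F = 2$ one must expand $P_2$ in the basis $\{1, X\}$ modulo $F$ and track when $tX + u$ is a square in $K = \Q[X]/F$ (hence the $\square(u)$, $\square(uD_F)$ terms — note $D_F$ appears because $\sqrt{D_F} \in K$), and when $\deg F = 4$ with $F = U\circ P_2$ one gets $M_{F,P_2}(X) = c\,U(X^2 - \tfrac{D_{P_2} - X^2}{\cdots})\cdots$ — more precisely the substitution $Y \mapsto$ (root of $X^2 = P_2(Y)$) factors $M$ through $U$ evaluated at the two branches of $P_2^{-1}$, producing the terms $\Omega(U(x^2))$ and $\Omega(U(\tfrac{x^2 - D_{P_2}}{4s}))$ in~\eqref{dfups}. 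I would organize this as a finite case analysis on $(\deg F, \deg P_2) \in \{1,\ldots,5\}\times\{1,2,\ldots\}$ with $\min \le 2$, in each case writing down $M_{F,P_2}$ explicitly (or its Galois group via the resolvent), computing $\lim\frac{\log Q}{Q}\sum N_{M_{F,P_2}}(p)$ by Lemma~\ref{omega} and Chebotarev, and matching against $\deg F + (\text{average of }S) - \Upsilon_{F,P_2}$; the substantive content is verifying that the ad hoc quantity $\Upsilon$ in~\eqref{dfups} is exactly the discrepancy in every case, which is a somewhat delicate but entirely finite check.
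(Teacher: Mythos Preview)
Your overall architecture---reduce to $F$ irreducible via Proposition~\ref{rp1}, relate $S_{F,P_2}(p)$ to the root count $N_{M_{F,P_2}}(p)$, and average using Lemma~\ref{omega}---matches the paper's. But there are two real issues.

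First, the boundary case $F\mid P_2$ is fine; you just miscomputed. For $F$ irreducible one has $\Omega(\gcd(F,P_2))=\Omega(F)=1$, not $\deg F$. With that fix the right-hand side reads $2\deg F + 1 - 2\deg F - 0 - 1 = 0$, as required.

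Second, and this is the genuine gap, your central relation
\[
N_{M_{F,P_2}}(p)\;=\;\sum_{\substack{x\bmod p\\F(x)\equiv 0}}\Bigl(1+\leg{P_2(x)}{p}\Bigr)+(\text{correction when }P_2(x)=0)
\]
is not correct, and the unnamed ``correction'' is doing far more work than you acknowledge. The left side counts $\ell\in\F_p$ with $P_2(\rho)=\ell^2$ for some root $\rho$ of $F$ in $\overline{\F_p}$, while your right side sums only over $\F_p$-rational roots of $F$. A root $\rho\in\F_{p^2}\setminus\F_p$ with $P_2(\rho)\in\F_p$ (which forces $\deg F\ge 2$ and $P_2(\rho)$ to lie in the subfield) contributes to $N_{M_{F,P_2}}(p)$ but is invisible in your sum. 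This happens for a positive proportion of primes precisely in the cases---the $t=0$ branch when $\deg F=2$, and the $F=U(P_2)$ branch when $\deg F\in\{3,4,5\}$---that $\Upsilon_{F,P_2}$ is designed to measure. So $\Upsilon$ is not about the leading coefficient of $P_2$ or ``extra factorizations of $M$ over $\Q$''; it is the average over $p$ of this non-rational-root discrepancy.

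The paper isolates this cleanly. Writing $\leg{P_2(x)}{p}=\#\{\ell:\ell^2\equiv P_2(x)\}-1$ and swapping sums gives
\[
S_{F,P_2}(p)=\sum_{\ell=1}^{p-1}N_{\gcd(F,P_2-\ell^2)}(p)+N_{\gcd(F,P_2)}(p)-N_F(p),
\]
and a short resultant lemma (Lemma~\ref{lemmaMult}) shows that the multiplicity of $\ell\neq 0$ as a root of $M_{F,P_2}$ mod $p$ equals $\sum_{k\ge 1}k\,N_{\gcd(F,P_2-\ell^2),k}(p)$. Substituting yields
\[
S_{F,P_2}(p)=N_{M_{F,P_2}}(p)-2\deg(\gcd(F,P_2))-2\sum_{\ell}N_{\gcd(F,P_2-\ell^2),2}(p)+N_{\gcd(F,P_2)}(p)-N_F(p),
\]
so that after averaging, everything is handled by Lemma~\ref{omega} except the $N_{\gcd,2}$ sum. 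That sum vanishes trivially when $\min(\deg F,\deg P_2)\le 1$, and otherwise is nonzero only when $F\mid(P_2-\ell^2)$ in $\F_p[X]$ for some $\ell$; a short elementary analysis of when this can occur (Euclidean division $P_2=QF+tX+u$ for $\deg F=2$; writing $F=U(P_2)+XV(P_2)$ for $\deg F\ge 3$) produces exactly the definition~\eqref{dfups} of $\Upsilon_{F,P_2}$. Your Chebotarev-in-$K(\sqrt{P_2(\beta)})$ route can be made to work, but to land on the stated formula you would still need a separate factorization analysis of $M_{F,P_2}$ over $\Q$ (essentially Lemma~\ref{factors}), and matching the two answers reintroduces the same case split; the paper's combinatorial route is more direct.
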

\begin{proof}
By Proposition~\ref{rp1} we can assume $P_1$ is irreducible. Also, we assume $p$ is sufficiently large. We start by introducing an extra sum,
\est{
S_{P_1,P_2}(p)&=\sum_{\substack{x \mod p \\ P_1(x) \equiv 0 \mod p}}\bigg(\sum_{\substack{\ell\mod p\\ P_2(x)\equiv \ell^2}}-1\bigg)
=\sum_{\ell=1}^{p-1}N_{\gcd(P_1,P_2-\ell^2)}(p)+N_{\gcd(P_1,P_2)}(p)-N_{P_1}(p).
}
By Lemma \ref{lemmaMult}, presented in the following, we obtain
\begin{align*}
S_{P_1,P_2}(p)&=N_{M_{P_1,P_2}}(p)-2\deg(\gcd(P_1,P_2))-2\sum_{\ell=0}^{p-1} N_{\gcd(P_1,P_2-\ell^2),2}(p)+N_{\gcd(P_1,P_2)}(p)-N_{P_1}(p)
\end{align*}
since $\deg(P_1,P_2-\ell^2)\leq2$, where $N_{P,2}$ is as in Definition~\ref{dfa2}. The case $\min(\deg(P_1,P_2))\leq1$ then follows at once by Lemma~\ref{omega} since trivially one has $N_{\gcd(P_1,P_2-\ell^2),2}(p)=0$. 

Now, let's assume $\deg(P_1)=2\geq\deg(P_2)$. We write  $P_2(X)=Q(X)P_1(X) + (tX+u)$ with $t,u\in\Q$, $0\neq Q\in\Q[X]$. 
Clearly, if $t\neq0$ then $\deg(\gcd(P_1,P_2-\ell^2))\leq1$ for  $p$ large enough and thus $N_{\gcd(P_1,P_2-\ell^2),2}(p)=0$. If instead $t=0$ then $\gcd(P_1,P_2-\ell^2)=P_1$ if $u\equiv\ell^2\mod p$ (which happens for two values of $\ell$ if $u$ is a non-zero square modulo $p$) and $\gcd(P_1,P_2-\ell^2)=1 $ otherwise. Since $P_1$ is reducible modulo $p$ if and only if $\D_{P_1}$ is a square modulo $p$ we then obtain in this case
\est{ 
\sum_{\ell\nequiv 0\mod p}2N_{\gcd(P_1,P_2-\ell^2),2}(p)=\frac12\bigg(1-\leg{\D_{P_1}}{p}\bigg)\bigg(\leg{u}{p}+\leg{u^2}{p}\bigg)
}
for $p$ large enough. Thus, recalling that $\D_{P_1}$ is not a square in $\Q$ since $P_1$ is irreducible, the claimed equality follows from the prime number theorem in arithmetic progressions.

Finally assume $3\leq \deg(P_1)\leq 5$ and $\deg(P_2)=2$. We write $P_1(X)=U(P_2(X))+XV(P_2(X))$ for some $U,V\in\Q[X]$ of degree $\leq2$ (which is easily seen to be always possible by the Euclidian division). Now, we claim that if $P_2-\ell^2 \mid P_1$ in $\F_p[X]$ for $p$ large enough then $V=0$, so that for $V\neq0$ one has $N_{\gcd(P_1,P_2-\ell^2),2}(p)=0$ and the result follows in this case. To see this, we notice that $P_2-\ell^2 \mid P_1$ if and only if $U(\ell^2) + \theta V(\ell^2) =0$ in $\F_p[\theta]$ where $\theta$ is the image of $X$ in the $2$-dimensional $\F_p$-vector space $\F_p[X]/(P_1(X)-\ell^2)$ of dimension 2. Since $(1,\theta)$ is a basis of this vector space, we must have $U(\ell^2) = V(\ell^2) =0 \bmod p$ and hence $U$ and $V$ have a common root in $\F_p$. This can happen for infinitely many primes modulo $p$ only if $U,V$ have a common factor in $\Q[X]$ and thus, since $P_1$ is irreducible, we must have $V=0$, as claimed. 

We are thus left with the case where $P_1(X)=U(P_2(X))$ for some (necessarily irreducible) degree $2$ polynomial $U$. As before we have $(P_2-\ell^2)|P_1$ if and only if $U(\ell^2)\equiv 0$ and $P_2-\ell^2$ is irreducible if and only if $\D_{P_2-\ell^2}=\D_{P_2}+4s\ell^2\equiv 0\mod p$. Thus, 
\est{ 
\sum_{\ell=0}^{p-1}2N_{\gcd(P_1,P_2-\ell^2),2}(p)&=\sum_{\substack{\ell'\nequiv 0\mod p,\\ U(\ell')\equiv0}}\bigg(1-\leg{\D_{P_2}+4s{\ell'}}{p}\bigg)\bigg(1+\leg{\ell'}{p}\bigg)\\
&=N_{U}(p)+S_{U,X}-S_{U,4s{X}+\D_{P_2}}-S_{U,4sX^2+\D_{P_2}\!X}
}
for $p$ large enough. The result then follows by appealing to the proposition in the case $\deg(P_1)=2$,
 since $M_{U,X}(X)=U(X^2)$, $M_{U,4s{X}+\D_{P_2}}(X)=(4s)^2U(\frac{X^2-\D_{P_2}}{4s})$. 

\end{proof}

\begin{lemma}\label{lemmaMult}
Let  $P_1(X),P_2(X)\in\Q[X]$ with $P_1$ square-free and let $p$ be large enough. Then, for $0\nequiv \ell\mod p$ we have
$$
m_{M_{P_1,P_2}}(\ell)= \sum_{k\geq 1} k\cdot N_{\gcd(P_1,P_2-\ell^2),k}(p),
$$
where $m_{M_{P_1,P_2}}(\ell)$ is the multiplicity of $\ell$ as a zero of $M_{P_1,P_2}$ in $\F_p[X]$. If $\ell\equiv 0\mod p$ then $m_{M_{P_1,P_2}}(0)$ is equal 
to $2\deg(\gcd(P_1,P_2))$.
\end{lemma}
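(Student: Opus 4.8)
\textbf{Proposal for the proof of Lemma~\ref{lemmaMult}.}
The plan is to expand $M_{P_1,P_2}$ through the product formula for the resultant, reduce the resulting factorisation modulo $p$, and then read off the multiplicity of $\ell$ directly.

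First I would write $P_1(Y)=c\prod_{i=1}^{m}(Y-\theta_i)$ over an algebraic closure, where $c$ is the leading coefficient of $P_1$ and $m=\deg P_1$; since $P_1$ is square-free the roots $\theta_i$ are pairwise distinct. Viewing $P_1(Y)$ and $X^2-P_2(Y)$ as polynomials in $Y$ over $\Q(X)$ and letting $n$ be the $Y$-degree of the latter, the product expansion of the resultant yields
\[
M_{P_1,P_2}(X)=\Res_Y\bigl(P_1(Y),\,X^2-P_2(Y)\bigr)=c^{n}\prod_{i=1}^{m}\bigl(X^2-P_2(\theta_i)\bigr).
\]
For $p$ large enough, $p$ divides neither $c$, nor the numerator or denominator of any coefficient of $P_1$ or $P_2$, nor $\D_{P_1}$; consequently $M_{P_1,P_2}\bmod p$ equals the resultant of the reductions, $P_1\bmod p$ remains square-free with $m$ distinct roots $\bar\theta_1,\dots,\bar\theta_m\in\overline{\F_p}$, and the displayed factorisation persists over $\overline{\F_p}$ (the constant $c^{n}$ being a nonzero unit, hence irrelevant to roots and multiplicities).

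Next I would count. As $p$ is odd, for $\ell\in\F_p^{\times}$ each factor $X^2-P_2(\bar\theta_i)$ contributes the root $X=\ell$ with multiplicity $1$ exactly when $P_2(\bar\theta_i)=\ell^2$, and with multiplicity $0$ otherwise; hence $m_{M_{P_1,P_2}}(\ell)$ equals the number of indices $i$ with $P_2(\bar\theta_i)=\ell^2$, that is, the number of common roots in $\overline{\F_p}$ of $P_1$ and $P_2-\ell^2$, i.e.\ the number of roots in $\overline{\F_p}$ of $g:=\gcd(P_1,P_2-\ell^2)$ (formed in $\F_p[X]$). Since $P_1\bmod p$ is square-free, so is $g$, and therefore its number of roots in $\overline{\F_p}$ is $\sum_{k\geq1}k\,N_{g,k}(p)$ by Definition~\ref{dfa2}; this is the first assertion. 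For $\ell=0$ the only change is that a factor $X^2-P_2(\bar\theta_i)$ with $P_2(\bar\theta_i)=0$ equals $X^2$ and so contributes the root $0$ with multiplicity $2$; thus $m_{M_{P_1,P_2}}(0)=2\cdot\#\{i:P_2(\bar\theta_i)=0\}$, which is twice the number of common roots of $P_1$ and $P_2$ in $\overline{\F_p}$, equal to $2\deg\gcd(P_1,P_2)$ for $p$ large (the degree of the gcd being unchanged under reduction, as one checks by reducing a B\'ezout identity over $\Q$).

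The argument is essentially bookkeeping once the resultant is expanded, so I do not expect a genuine obstacle; the only items requiring a little care are the legitimacy of commuting $\Res_Y$ with reduction modulo $p$ and the persistence of square-freeness of $P_1$ and of $g$, all of which hold for all sufficiently large $p$ by the non-vanishing modulo $p$ of the relevant leading coefficients, denominators and of $\D_{P_1}$, together with a routine check that the degenerate configurations (for instance $P_2$ constant, or $g=1$, or $P_2=0$ with the convention $\gcd(P_1,0)=P_1$) are correctly accounted for by the same formulas.
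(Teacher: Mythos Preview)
Your proposal is correct and follows essentially the same route as the paper's proof: expand the resultant as $c^{n}\prod_i(X^2-P_2(\theta_i))$, reduce modulo a large prime so that $P_1$ stays square-free, and read off the multiplicity of $\ell$ by counting those $\bar\theta_i$ with $P_2(\bar\theta_i)=\ell^2$ (with the doubling at $\ell=0$). The paper's argument is simply a terser rendering of the same computation, so there is nothing substantively different to compare.
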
 
\begin{proof}
Let $p$ be large enough so that it doesn't divide the discriminant and the leading coefficient of $P_1$ (as well as the denominators of $P_1$ and $P_2$). Then, since $\mu-\ell^2$ has two distinct roots whenever $\mu\nequiv0\mod p$, by the standard properties of the resultant for $\ell\equiv 0$ we have
\est{
m_{M_{P_1,P_2}}(\ell)=\sum_{\substack{\rho\in\overline\F_{p}[X],\\ P_1(\rho)\equiv 0,\ P_2(\rho)-\ell^2 \equiv 0
}}1=\sum_{k\geq1}k\cdot N_{\gcd(P_1,P_2-\ell^2),k}(p),
}
as desired. For $\ell\equiv 0$ we have that each zero $\rho$ gives a zero of multiplicity two of $M_{P_1,P_2}(X)$ at $X=0$ and the result follows.
\end{proof}

\subsection{Proof of Theorem~\ref{mtt}}\label{section4}
We are now ready for the proof of Theorem~\ref{mtt}. In order to do so, we start by applying~\eqref{msfr} and then one of the above propositions depending on the case considered. We recall that we are assuming that $A$ and $B$ are not both identically zero. 

If $A=0$ and $B$ is constant, then the definition of $S$ give trivially $r=0$. 
If $A=0$ and $B$ is non constant, then Propositions~\ref{rp2} and~\ref{rp4} give
\est{
r=\lim_{Q\to\infty}\frac{\log Q}Q\sum_{p\leq Q}S_{B,C}(p)=\Omega(M_{B^*,C})+\omega(\gcd(B, C))-2\deg(\gcd(B^*,C))-\omega(B)-\Upsilon_{B,C}
}

If $A=k\in\Q\setminus\{0\}$, then Proposition~\ref{rp3} gives
\est{
r=\lim_{Q\to\infty}\frac{\log Q}Q\sum_{p\leq Q}S_{B^2-4kC,k}(p)-\square(k)=\begin{cases}
\omega(B^2-4kC)-1 & \text{if $k\in\Q^2\setminus\{0\}$},\\
\sigma(k,B^2-4kC)& \text{if $k\in\Q\setminus\Q^2$}.\\
\end{cases}
}

If $\deg(A)=1$ with $A|B$, then Proposition~\ref{rp1},~\ref{rp2} and~\ref{rp4} give
\est{
r&=\lim_{Q\to\infty}\frac{\log Q}Q\sum_{p\leq Q}(S_{A,C}(p)+S_{B^2-4AC,A}(p))\\
&=\square(C(\alpha))+\Omega(M_{(B^2-4AC)^*,A})-\omega(B^2-4AC)-1,
}
where $\alpha$ is the root of $A$. If $A\nmid B$, then $\gcd(A,B)=\gcd(B^2-4AC,A)=1$ and the same computation gives
\est{
r&=\Omega(M_{(B^2-4AC)^*,A})-\omega(B^2-4AC).
}

Finally, if $\deg(A)=2$, by Proposition~\ref{rp4} we have
\est{
r&=\lim_{Q\to\infty}\frac{\log Q}Q\sum_{p\leq Q}(S_{\gcd(A,B),C}(p)+S_{B^2-4AC,A}(p))-\square(A)\\
&=\Omega(M_{\gcd(A,B^*),C})+\omega(\gcd(\gcd(A,B),C))-2\deg(\gcd(\gcd(A,B^*),C))-\Upsilon_{\gcd(A,B),C}+{}\\
&\quad-\omega(\gcd(A,B))+\Omega(M_{(B^2-4AC)^*,A})+\omega(\gcd(B^2-4AC,A))-2\deg(\gcd((B^2-4AC)^*,A))+{}\\
&\quad-\Upsilon_{B^2-4AC,A}-\omega(B^2-4AC)-\square(A)\\
}
and the result follows since $\gcd((B^2-4AC)^*,A) = \gcd(A,B^*)$ and $\gcd(C,\gcd(A,B)^*) = \gcd(A,B^*,C)$.

\section{Applications: estimates for the rank, rational points, generic families}\label{appl}
The goal of this section is to provide an effective explanation for the maximum value of $r = \rank(\E/\Q(T))$ of the family $\E$ defined in \eqref{familyCurve} for each possible degree of $A$ (i.e. $\deg A=0, 1$ or $2)$. Furthermore, we provide explicit rational points in every case such that $r\geq 1$, and we present applications of our results.
~\\
 We first state a general lemma about the irreducible factors of $M_{F,G}(X)$. We recall that $M_{F,G}$ is defined in~\eqref{dfrs} and that, for an irreducible  monic $F$, it can be written as
\est{
M_{F,G}(X)=\prod_{\rho \colon F(\rho)=0} (X^2-G(\rho)).
}

\begin{lemma}\label{factors}
Let $F(X), G(X)\in\Q[X]$ with $G$ of degree $1$ or $2$ and $F$ monic irreducible of degree $d$. Then 
\begin{itemize}
\item If one (and thus all) zero $\rho$ of $F$ is such that $G(\rho)$ is a square in 
$\Q(\rho)$ then $M_{F,G}(X) = (-1)^d K(X)^eK(-X)^e$ where $K(X) \in \Q[X]$ is irreducible and where $e=1$ or $2$.  Furthermore, $e=2$ if and 
only if $F(X)=P(G(X))$ for a certain polynomial $P$ of degree $d/2$ (in particular $P$ is irreducible and $G$ has degree 2).  
\item If $G(\rho)$ is not a square in 
$\Q(\rho)$ for one (and thus any) zero $\rho$ of $F$ then $M_{F,G}(X)=K(X^2)^e$ with $K \in \Q[X]$ such that $K(X^2)$ is irreducible and where $e=1$ or $2$. Furthermore, $e=2$ if 
and only if   $F(X)=P(G(X))$ for a certain polynomial $P$ of degree $d/2$ (in particular $P$ is irreducible and $G$ has degree 2).
\end{itemize}
\end{lemma}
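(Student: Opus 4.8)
The plan is to analyze the factorization of $M_{F,G}(X)=\prod_{\rho\colon F(\rho)=0}(X^2-G(\rho))$ over $\Q$ by passing to the splitting field of $F$ and studying the Galois action on the roots $X=\pm\sqrt{G(\rho)}$.

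\textbf{Setup.} First I would fix an algebraic closure $\overline\Q$ and let $\rho_1,\dots,\rho_d$ be the distinct roots of $F$ (distinct since $F$ is irreducible, hence separable), and set $L$ to be the Galois closure of $\Q(\rho_1,\dots,\rho_d)$ together with all the $\sqrt{G(\rho_i)}$. Over $\overline\Q$ the polynomial factors as $\prod_i (X-\sqrt{G(\rho_i)})(X+\sqrt{G(\rho_i)})$ up to a leading sign: since $F$ is monic of degree $d$, the leading coefficient of $M_{F,G}$ is $(-1)^d$, which accounts for the $(-1)^d$ prefactor. So I would first pull out $(-1)^d$ and work with the monic polynomial $\widetilde M(X):=(-1)^dM_{F,G}(X)=\prod_i(X-\sqrt{G(\rho_i)})(X+\sqrt{G(\rho_i)})$.

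\textbf{First bullet: $G(\rho)$ a square in $\Q(\rho)$.} Here each $\sqrt{G(\rho_i)}$ lies in $\Q(\rho_i)$, so the set of roots of $\widetilde M$ is $\{\pm\beta_i\}$ where $\beta_i=\sqrt{G(\rho_i)}\in\Q(\rho_i)$, and each $\beta_i$ is a conjugate of $\beta_1$ over $\Q$ (Galois acts transitively on the $\rho_i$ and hence on a compatible choice of the $\beta_i$). Thus $K(X):=\prod_i(X-\beta_i)$ is the minimal polynomial of $\beta_1$ — I should check $\beta_1$ has degree exactly $d$, i.e. $\Q(\beta_1)=\Q(\rho_1)$: since $\beta_1\in\Q(\rho_1)$ one inclusion is clear; the other requires ruling out $\Q(\beta_1)\subsetneq\Q(\rho_1)$, which is where the multiplicity $e$ enters. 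If $[\Q(\beta_1):\Q]=d$ then $K$ is irreducible of degree $d$ and $\widetilde M(X)=K(X)K(-X)$ with all roots of $K,K(-X)$ appearing once, i.e. $e=1$ — unless some $\beta_i$ coincides with some $-\beta_j$, i.e. $G(\rho_i)=G(\rho_j)$ with $i\ne j$, which forces $G$ to be quadratic and $\rho_j$ the "other" preimage $\rho_i'$ of $G(\rho_i)$; then $F$ is stable under $\rho\mapsto\rho'$ and descends through $G$, i.e. $F(X)=P(G(X))$, and each $\pm\beta_i$ is hit twice, giving $e=2$. Conversely if $F(X)=P(G(X))$ the same computation shows $K$ has degree $d/2$ and $e=2$. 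I would organize this as: (a) $K$ irreducible is automatic from transitivity; (b) the value of $e$ is governed by whether the multiset $\{G(\rho_i)\}$ has repeats, and repeats happen exactly when $F$ factors through $G$.

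\textbf{Second bullet: $G(\rho)$ not a square in $\Q(\rho)$.} Now $\sqrt{G(\rho_i)}\notin\Q(\rho_i)$, so $[\Q(\rho_i,\sqrt{G(\rho_i)}):\Q]=2d$, and $\pm\sqrt{G(\rho_i)}$ are conjugate over $\Q(\rho_i)$, hence the full Galois orbit of $\sqrt{G(\rho_1)}$ over $\Q$ is all of $\{\pm\sqrt{G(\rho_i)}\}_i$ — so $\widetilde M$ (or rather $M_{F,G}$, which is even-looking) is, up to sign, the minimal polynomial of $\sqrt{G(\rho_1)}$ when all $2d$ values are distinct, and it is a polynomial in $X^2$: indeed $\widetilde M(X)=\prod_i(X^2-G(\rho_i))=:K(X^2)$ where $K(Y):=\prod_i(Y-G(\rho_i))$. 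Since $\Q(\sqrt{G(\rho_1)})$ has degree $2d$ and contains $\Q(\rho_1)$ of degree $d$, and $\sqrt{G(\rho_1)}$ has minimal polynomial dividing $K(X^2)$ of degree $2d$, $K(X^2)$ is irreducible exactly when $K$ has degree $d$, i.e. the $G(\rho_i)$ are distinct — again this fails precisely when $F$ factors through a quadratic $G$, giving $e=2$. I would note $K(Y)=\Res_X(F(X),Y-G(X))$ up to normalization, identifying $K$ concretely, and that $K$ irreducible over $\Q$ is equivalent to $K(X^2)$ irreducible here since $\sqrt{G(\rho_1)}\notin\Q(\rho_1)\supseteq\Q(G(\rho_1))$.

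\textbf{Main obstacle.} The delicate point common to both cases is pinning down the multiplicity $e\in\{1,2\}$ and proving the clean characterization "$e=2\iff F(X)=P(G(X))$ with $\deg P=d/2$". The content is: the map $\rho\mapsto G(\rho)$ on the roots of $F$ is either injective (giving $e=1$) or exactly two-to-one (giving $e=2$), and two-to-one-ness is a Galois-stable condition forcing $G$ quadratic and $F$ to be a polynomial in $G$. I would prove this by: if $G(\rho_i)=G(\rho_j)$ for some $i\ne j$ then since $\deg G\le 2$ necessarily $\deg G=2$ and $\rho_i,\rho_j$ are the two roots of $G(X)-G(\rho_i)$; the involution $\iota$ swapping the two $G$-preimages is defined on all of $\overline\Q$, commutes with $\Gal$, and permutes the roots of $F$ (as the orbit of $\rho_i$), so $F$ is $\iota$-invariant, hence lies in the $\iota$-invariant subring $\Q[G(X)]$, i.e. $F=P\circ G$; then $P$ is irreducible of degree $d/2$ because $\Q(\rho_1)/\Q(G(\rho_1))$ has degree $2$. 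The converse direction and the degree bookkeeping ($P$ irreducible $\Rightarrow$ $d$ even, etc.) are then routine. Finally I should dispatch the trivial-looking but necessary remark that in the first bullet $K(X)$ and $K(-X)$ have no common root — equivalently $0$ is not among the $\pm\beta_i$ — unless $G(\rho)=0$, but $G(\rho)=0$ contradicts $G(\rho)$ being a nonzero square situation only if $G$ has a rational root equal to some $\rho_i$, which cannot happen for $\deg F=d\ge 1$ irreducible unless $d=1$; I would handle the $d=1$ edge case (where $M_{F,G}(X)=X^2-G(\rho_1)$ with $\rho_1\in\Q$) separately or observe it is consistent with both statements with $e=1$.
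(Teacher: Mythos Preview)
Your approach is essentially the same as the paper's: factor $M_{F,G}$ over $\overline\Q$, use Galois transitivity on the roots of $F$ to see that the relevant product $K_1(X)=\prod_\rho(X-R(\rho))$ (first bullet) or $K(Y)=\prod_\rho(Y-G(\rho))$ (second bullet) is a power of a single irreducible, and bound the exponent by~$2$ using $\deg G\le 2$. Your ``main obstacle'' paragraph, with the involution $\iota$ swapping the two $G$-preimages and the observation that $\iota$ commutes with $\Gal(\overline\Q/\Q)$, is a clean explicit version of what the paper compresses into one sentence when it passes from $e=2$ to $F=P\circ G$.

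There is one muddled spot in your first bullet. You write that if $[\Q(\beta_1):\Q]=d$ then $e=1$ ``unless some $\beta_i$ coincides with some $-\beta_j$ \dots\ and each $\pm\beta_i$ is hit twice, giving $e=2$.'' This conflates two different phenomena. The exponent $e$ in $K_1=K^e$ is governed by coincidences $\beta_i=\beta_j$ with $i\neq j$, i.e.\ $R(\rho_i)=R(\rho_j)$; a coincidence $\beta_i=-\beta_j$ (equivalently $R(\rho_i)=-R(\rho_j)$) does \emph{not} increase $e$: it only forces $K(X)$ and $K(-X)$ to share a root, hence $K(-X)=(-1)^{\deg K}K(X)$. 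Both situations imply $G(\rho_i)=G(\rho_j)$, but they are not the same. The paper sidesteps this by never looking at $K(-X)$ when counting $e$: it simply observes that $e$ equals the number of roots $\rho'$ of $F$ with $R(\rho')=R(\rho)$, and any such $\rho'$ is a root of $G(X)-R(\rho)^2$, whence $e\le\deg G\le 2$. If you reorganize your argument the same way (count repetitions of $R(\rho)$, not of $\pm R(\rho)$), the confusion disappears.

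Your treatment of the second bullet, defining $K(Y)=\prod_\rho(Y-G(\rho))$ directly and arguing that $K(X^2)$ is irreducible from $[\Q(\sqrt{G(\rho_1)}):\Q]=2d$, is if anything a bit more transparent than the paper's formulation, and your remark that $K$ is (up to a constant) $\Res_X(F(X),Y-G(X))$ is a useful identification.
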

\begin{proof}
Let $Z= \{ \rho \colon F(\rho)=0\}$. First, let's assume that for some $\rho\in S$ we have that $G(\rho)$ is a square in $\Q(\rho)$, i.e. that there exists a polynomial $R\in\Q[X]$ (of degree $\leq2$) such that $R(\rho)^2=G(\rho)$. Clearly we have $M_{F,G}(X)=(-1)^dK_1(X)K_1(-X)$ for $K_1(x)=\prod_{\rho \in Z} (X-R(\rho))$. Moreover, by Galois theory $K_1(X)\in\Q[X]$. Also, again by Galois theory, any irreducible factor of $K_1$ has to vanish at $R(\rho)$ for all $\rho\in S$. It follows that $K_1(X)=K(X)^e$ for some irreducible $K(X)$ and some $e\geq1$. Notice that for each  $\rho \in Z$ there are exactly $e$ zeros in $Z$ such that $R$ takes the value $R(\rho)$. All such zeros are then also zeros of $G(x)-R(\rho)^2$ so that, since $\deg(G)\leq 2$, one must have $e\leq2$. 
Finally, let $e=2$ (and thus necessarily $\deg(G)=2$). Then, for some subset $Z'\subseteq Z$ we have $K(X)=\prod_{\rho\in Z'}(X-R(\rho))$ and thus $P(X)=\prod_{\rho\in Z'}(X-G(\rho))\in\Q[X]$. Since $P(G(X))$ is monic and has the same zeros of $F$  we must have $P(G(X))=F(X)$, as claimed. The vice versa is also clear.

Next, we assume that $G(\rho)$ is not a square in $\Q(\rho)$ for any $\rho\in Z$. Any irreducible factor $H(X)$ of $M_{F,G}(X)$ must have the shape $H(X)=\prod_{\rho \in Z_1} (X^2-G(\rho))$ for some subset $Z'\subseteq Z$ since by Galois theory we have that either both roots or no root of the irreducible polynomial $X^2-G(\rho)$ can be a zero of $H(X)$. Then, the same argument as above implies that $M_{F,G}(X)=K(X^2)^e$ for some irreducible $K(X^2)$ and some $e\in\{1,2\}$ and that  $e=2$ if and only if $F(X)=P(G(X))$ for some polynomial of degree $d/2$.
\end{proof}

\subsection{Estimates and applications when $A$ is a constant polynomial}

\subsubsection{The case $A=0$}
Let us assume first that $A=0$, which is equivalent to $\alpha_2, \alpha_4$ and $\alpha_6$ in~\eqref{eqine} having degree $\leq 1$. Then Theorem~\ref{mtt} gives 
\begin{equation}\label{fsds}
r =\Omega(M_{B^*,C})-2\deg(\gcd(B^*,C))+\omega(\gcd(B, C))-\omega(B)-\Upsilon_{B,C}.
\end{equation}

If $\theta\in\overline\Q$ is an algebraic number, we denote by $[\theta]_G$ its orbit under the action of $G=\Gal(\overline{\Q}/\Q)$. We remark that if $C\in \Q[X]$ is a polynomial, then the condition ``$C(\theta)$ is a square in $\Q(\theta)$" is independent of the representative chosen in 
$[\theta]_G$.

\begin{theo}\label{theoconjclass} Assume that $A=0$\suppress{and that $B^*$ does not divide $C$}. Then 
$$
r = \sharp \{ [\theta]_G \colon B(\theta)=0, C(\theta) \text{ is a non-zero square in } \Q(\theta) \}. 
$$
In particular $r\leq 2$. Furthermore, every value between 0 and 2 can be obtained by $r$.
\end{theo}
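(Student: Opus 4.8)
The plan is to start from the rank formula of Theorem~\ref{mtt} in the case $A=0$ --- equivalently from the identity $r=\lim_{Q\to\infty}\frac{\log Q}{Q}\sum_{p\le Q}S_{B,C}(p)$ obtained in its proof from~\eqref{msfr} --- and to decompose the right-hand side factor by factor. If $B$ is constant both sides vanish, so assume $B$ non-constant. By Proposition~\ref{rp1} I would write $r=\sum_{F\mid B^*,\,F\text{ irr.}}\Lambda_F$ with $\Lambda_F:=\lim_{Q\to\infty}\frac{\log Q}{Q}\sum_{p\le Q}S_{F,C}(p)$. Since $\deg B\le2$, each monic irreducible factor $F$ of $B$ has degree $1$ or $2$, and these factors are in bijection with the $G$-orbits of roots of $B$ (a linear factor $X-\beta$ giving the singleton $\{\beta\}$, an irreducible quadratic giving the conjugate pair of its two roots). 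So it suffices to prove, for each such $F$ with root $\theta$, that $\Lambda_F=1$ when $C(\theta)$ is a non-zero square in $\Q(\theta)$ and $\Lambda_F=0$ otherwise.

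The key reduction is to replace $C$ by $R:=C\bmod F$, which has degree $\le1$ because $\deg F\le2$: at the roots of $F$ modulo a large $p$ one has $C\equiv R$, so $S_{F,C}(p)=S_{F,R}(p)$ for all large $p$, whence $\Lambda_F$ equals the corresponding limit with $R$ in place of $C$. This trivializes the remaining cases. If $F\mid C$ then $R=0$, so $S_{F,R}(p)=0$ and $\Lambda_F=0$ (Proposition~\ref{rp2}); consistently, the roots of $F$ are zeros of $C$ and contribute nothing on the right. Assume $F\nmid C$, so $C(\theta)=R(\theta)\neq0$ and ``non-zero square'' means ``square''. If $F=X-\beta$ is linear, $R=C(\beta)\in\Q^\times$ and Proposition~\ref{rp3} (or the prime number theorem in arithmetic progressions) gives $\Lambda_F=\square(C(\beta))$, which is $1$ exactly when $C(\beta)$ is a square in $\Q=\Q(\beta)$. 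If $F$ is an irreducible quadratic and $R=u$ is constant, Proposition~\ref{rp3} gives $\Lambda_F=1$ iff $u$ is a square in $\Q[X]/(F)\cong\Q(\theta)$; again the claim holds.

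The one case requiring real work is $F$ an irreducible quadratic with $R=tX+u$, $t\neq0$. Here $\gcd(F,R)=1$ and $\Upsilon_{F,R}=0$ (the first alternative in~\eqref{dfups} cannot occur, as $\deg R<\deg F$ forces the Euclidean quotient to be $0$), so Proposition~\ref{rp4} gives $\Lambda_F=\Omega(M_{F,R})-1$, with $M_{F,R}=M_{F,C}$ of degree $4$. I would then apply Lemma~\ref{factors} to $F$ and $G=R$ of degree $1$: if $C(\theta)$ is a square in $\Q(\theta)$ then $M_{F,R}=K(X)K(-X)$ with $K$ irreducible --- the exponent $e$ of that lemma cannot be $2$ since $\deg R=1$ --- so $\Omega(M_{F,R})=2$ and $\Lambda_F=1$; if not, $M_{F,R}=K(X^2)$ is irreducible, $\Omega(M_{F,R})=1$ and $\Lambda_F=0$. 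I expect this to be the main obstacle: one must identify $M_{F,C}$ with $M_{F,R}$, observe that reducing mod $F$ annihilates the $\Upsilon$-correction (which is genuinely nonzero if one keeps $C$ itself when $t=0$), and extract $\Omega(M_{F,R})$ from Lemma~\ref{factors} through the square condition.

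Summing $\Lambda_F$ over the irreducible factors of $B^*$ then yields $r=\sharp\{[\theta]_G:B(\theta)=0,\ C(\theta)\text{ a non-zero square in }\Q(\theta)\}$. Since $\deg B\le2$ there are at most two $G$-orbits of roots of $B$, hence $r\le2$; and the values $0,1,2$ all occur, e.g.\ $B=X,\ C=X^3+2$ gives $r=0$ (as $2\notin\Q^2$), $B=X,\ C=X^3+4$ gives $r=1$ (as $4=2^2$), and $B=X^2-X,\ C=X^3-X+1$ gives $r=2$ (as $C(0)=C(1)=1$).
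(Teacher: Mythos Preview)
Your argument is correct, and it takes a genuinely different route from the paper's own proof. The paper starts from the explicit formula of Theorem~\ref{mtt} (equation~\eqref{fsds}) and analyzes it case by case according to the factorization type of $B^*$, computing $\Omega(M_{B^*,C})$, the $\gcd$ terms, and the correction $\Upsilon_{B,C}$ directly in each subcase; in particular, when $B$ is an irreducible quadratic with $C\equiv u\pmod B$ constant, the paper evaluates $\Upsilon_{B,C}=1+\square(u)-\square(uD_B)$ and checks that the resulting $r=\square(u)+\square(uD_B)$ matches the claim. You instead step back to the average $r=\lim\frac{\log Q}{Q}\sum_p S_{B,C}(p)$, split it factorwise via Proposition~\ref{rp1}, and then reduce $C$ modulo each irreducible $F\mid B$ to a remainder $R$ of degree at most~$1$. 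This reduction is the key simplification: it lets you treat constant $R$ uniformly through Proposition~\ref{rp3} (so the $t=0$ subcase is absorbed into $\sigma(u,F)$ without ever computing $\Upsilon$), and for linear $R$ the Euclidean quotient vanishes so $\Upsilon_{F,R}=0$ trivially, reducing the question to $\Omega(M_{F,R})\in\{1,2\}$ via Lemma~\ref{factors}. Your approach yields a cleaner factor-by-factor dichotomy ($\Lambda_F\in\{0,1\}$) and avoids the somewhat ad~hoc case analysis of the paper; the paper's approach, on the other hand, illustrates concretely how the various pieces of the formula in Theorem~\ref{mtt} interact and cancel.
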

\begin{proof}
First, we remark that we can assume that $B^*$ does not divide $C$, because otherwise we have $M_{B^*,C} = X^{2\deg(B^*)}$, $\gcd(B^*,C) = B^*$ and $\Upsilon_{B,C}=0$. In particular, $r=0$, as claimed. 
\medskip

Now, assume $B$ is an irreducible polynomial of degree 2, so that in particular $B=\alpha B^*$ for some $\alpha\in\Q\setminus\{0\}$ (and thus $\gcd(B,C)=1$). We write $C=QB + tX + u$. If $t=0$ (and thus $u\neq0$) we have $M_{B^*,C}=(X^2-u)^2$ and so $\Omega(M_{B^*,C})=2(1+\square(u))$. Moreover, since $\Upsilon_{B,C}=1+\square(u)-\square(uD_{P_1})$, where $D_B$ is the discriminant of $B$, we have $r=\square(u)+\square(uD_{B})$. Thus, $r=0$ unless either $u$ or $uD_{B}$ is a square, in which case $r=1$. The Theorem then follows in this case, since one among $u$ and $uD_{B}$ is a square if and only if $C(\theta)=u$ is a non-zero square in $\Q(\theta)$ (where $\theta$ is any root of $B$).
If instead, $t\neq 0$, then $\Upsilon_{B,C}=0$ and by Lemma~\ref{factors} we obtain $\Omega(M_{B^*,C})= 2$ or $1$ depending on whether $C(\theta)=t\theta+u$ is a square or not in $\Q(\theta)$, as claimed.
\medskip

Assume next that $B^*$ is a degree 2 polynomial that splits in $\Q$, i.e. $B(X)=\alpha B^*(X)= \alpha(X-b_1)(X-b_2)$ with $\alpha\in\Q\setminus\{0\}$, $b_1,b_2\in\Q$, $b_1\neq b_2$. In this case  $\Upsilon_{B,C}=0$.  If $\gcd(B,C)=1$ (and thus $C(b_1),C(b_2)\neq0$) then $M_{B^*,C}=(X^2-C(b_1))(X^2-C(b_2))$. In particular, $\Omega(M_{B^*,C})=2+\square(C(b_1))+\square(C(b_2))$ and so $r=\square(C(b_1))+\square(C(b_2))$, as claimed. Moreover, it is clear that every value of $r$ in $\{0,1,2\}$ can be achieved.
If instead $\gcd(C,B)=(X-b_1)$, then $M_{B^*,C}=X^2(X^2-C(b_2))$. Thus, $\Omega(M(C,B)) =3+\square(C(b_2))$ and $r=\square(C(b_2))$, as desired.
\medskip

Finally, if $B^*= (X-b)$ is a degree one polynomial, then $\Upsilon_{B,C}=0$ and $M_{B^*,C}(X)=X^2-C(b)$. This implies $r=\square(C(b))$, which concludes the proof.
\end{proof}

\noindent

\begin{rem*}
The result of Theorem~\ref{theoconjclass} suggests that in the family of curves \eqref{familyCurve} with $A=0$ the generic curve has rank 0, since for every quadratic number field $\Q(\rho)$ the amount of cubic integer polynomials $C$ such that $C(\rho)$ is a square is negligible compared to the overall number of cubic polynomials. This remark fits with a conjecture by Cowan \cite{Cowan} stating that a typical elliptic surface has rank 0. We investigate this question in some cases in~\cite{bbd2}. 
\end{rem*}

As it is clear from the theorem, families of rank $2$ with $A=0$ can arise only if $B$ splits as a product of two distinct factors, so that up to a linear change 
of variables in $X$, we can assume that $B(X)=bX(X-1)$ for some $b\in\Q\setminus\{0\}.$ Writing $C(X)=X^3+c_2X^2+c_1X+c_0$ we thus have that $r=2$ 
if and only if $C(0)=c_0$ and $C(1)=1+c_2+c_1+c_0$ are both non-zero squares. If this is the case, writing $c_0=k^2$ and $1+c_2+c_1+c_0=\ell^2$, 
with $k\neq 0$ and $\ell\neq 0$,  we have that the rank $2$ curves of this form can be always reduced to
$$
\E\colon Y^2 = X^3 + (bT+c_2)X^2 + (-bT + \ell^2 -k^2 -c_2 - 1)X+k^2
$$
which could be simplified further with the linear change $T\leftrightarrow bT+c_2$ giving
\begin{equation}\label{example_geo}
Y^2 = X^3 + TX^2 -(T + k^2+1-\ell^2)X + k^2.
\end{equation}
Notice that we can also explicitly determine two points $P_1=(0,k)$ and $P_2=(1,\ell)$ in $\E(\Q(T))$ which are both easily verified to be non-torsion points.
Notice also that once the Equation~\eqref{example_geo} is given, we can also compute the rank over $\overline{\Q}(T)$ with the Shioda-Tate formula~\eqref{rankclos} since for the finite place $v$ of bad reduction, the reduction type is is $I_1$ with $m_v=1$ and for the place $v$ at infinity 
the reduction type is $I_2^*$ with $m_v=7$. Thus one obtains $8-\sum_v (m_v-1) = 8-6=2$. 

\begin{ex}
It is possible to determine explicitly a non-torsion point over $\Q(T)$ also when the roots of $B$ are not rational. For example, let $B(X)=X^2+X+1$ and $C(X)=X^3-4$, and let $\rho$ and $-\rho-1$ be the roots of $B(X)$. Notice that $C(\rho) = -3$ is a square in $\Q(\rho)$. Then the curve
\begin{equation}\label{exemple_1}
E \colon Y^2 = B(X)T + C(X) = X^3+TX^2 + TX+ T-4
\end{equation}
has the two conjugates points $P_1=(\rho,2\rho+1), P_2=(-\rho-1,-2\rho-1)\in\Q(\rho)(T)$ suggested by Theorem~\ref{theoconjclass}. Then, summing them we find the point $P_1+P_2 = (-T+5,2T-11) \in E(\Q(T))$. For more details on the computations, see Section \ref{algrem}. 
\end{ex}

\subsubsection{The case $A$ is a non-zero square}
Let us assume now  that $A(X)=k^2$ is a non-zero square in $\Q$. In this case Theorem~\ref{mtt} gives
\begin{equation}\label{Aequalsquare}
r =  \omega(B^2-4k^2C) -1.
\end{equation}
The maximal possible rank is 3 which is obtained when $B^2-4k^2C$ is a product of 4 distinct linear factors in $\Q[X]$  (and in particular $\deg(B)=2$). Under this condition, we have $B^2(X)-4k^2C(X)=b(X-r_1)(X-r_2)(X-r_3)(X-r_4)$ for some $b\in\Q\setminus\{0\}$, and $r_1,r_2,r_3,r_4\in\Q$ distinct. Then we get four rational points of the form $\big(r_i,kT+\frac{B(r_i)}{2k}\big)$.  The proposition below shows that these points sum to zero and thus are not linearly independent. In particular, this gives a natural interpretation of the "-1" term in formula~\eqref{Aequalsquare}. 
\begin{prop}\label{sum_points}
Let $\E$ as in~\eqref{familyCurve} with $A=k^2$, $k\in\Q\setminus\{0\}$. For every root $\theta \in \overline{\Q}$ of $B^2-4k^2C$ consider the point $P_\theta:=\big(\theta,kT+\frac{B(\theta)}{2k}\big) \in \E(\Q(T))$.  Then, we have 
$$
\sum_{\theta} n_\theta P_\theta = O \in \E(\Q(T)),
$$
where $n_\theta$ is the multiplicity of $\theta$ as a root of $B^2-4k^2C$.
\end{prop}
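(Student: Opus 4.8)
The plan is to exhibit an explicit rational function $g$ on $\E$ whose divisor is $\sum_\theta n_\theta(P_\theta)-N\,(O)$, where $O$ is the point at infinity and $N:=\deg(B^2-4k^2C)$, and then to invoke the standard fact that the zeros of a nonzero rational function on an elliptic curve sum, under the group law, to $O$. I would pass to a finite extension of $\Q(T)$ (or simply to $\overline\Q(T)$) containing all the roots $\theta$, so that each $P_\theta$ is a genuine point; since the desired conclusion $\sum_\theta n_\theta P_\theta=O$ can be checked there, and the divisor $\sum_\theta n_\theta(P_\theta)$ is $\Gal(\overline\Q/\Q)$-stable, it descends to $\E(\Q(T))$. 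First, writing $\E$ in Weierstrass form $Y^2=X^3+(b_2T+c_2)X^2+(b_1T+c_1)X+(k^2T^2+b_0T+c_0)$ (with $B=b_2X^2+b_1X+b_0$, $C=X^3+c_2X^2+c_1X+c_0$), one checks directly that $P_\theta\in\E$: at $X=\theta$, $Y=kT+\tfrac{B(\theta)}{2k}$ the difference $Y^2-(k^2T^2+B(X)T+C(X))$ becomes $\tfrac{B(\theta)^2}{4k^2}-C(\theta)=\tfrac{1}{4k^2}(B^2-4k^2C)(\theta)=0$.

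I would then set $g:=Y-kT-\tfrac{B(X)}{2k}$ and its conjugate $\widetilde g:=Y+kT+\tfrac{B(X)}{2k}$; both are nonzero in the function field of $\E$ because $4k^2C=B^2$ is impossible, $C$ being monic of odd degree. On $\E$,
\[
g\,\widetilde g=Y^2-\Big(kT+\tfrac{B(X)}{2k}\Big)^2=C(X)-\tfrac{B(X)^2}{4k^2}=-\tfrac{1}{4k^2}\,(B^2-4k^2C)(X).
\]
Writing $(B^2-4k^2C)(X)=c\prod_\theta(X-\theta)^{n_\theta}$ and using $\operatorname{div}(X-\theta)=(P_\theta)+(P_\theta^-)-2(O)$, where $P_\theta^-=-P_\theta=\big(\theta,-kT-\tfrac{B(\theta)}{2k}\big)$ is the other point above $X=\theta$ (distinct from $P_\theta$, since its ordinate has nonzero $T$-coefficient), this gives
\[
\operatorname{div}(g)+\operatorname{div}(\widetilde g)=\sum_\theta n_\theta\big((P_\theta)+(P_\theta^-)-2(O)\big).
\]
Now $g$ and $\widetilde g$ are polynomials in the affine coordinates, so their only pole is $O$; the zeros of $g$ all lie among the $P_\theta$ (the relation $g=0$ forces $X$ to be a root of $B^2-4k^2C$ and then $Y=kT+\tfrac{B(X)}{2k}$), the zeros of $\widetilde g$ lie among the $P_\theta^-$, and these two finite sets are disjoint because $\widetilde g(P_\theta)=2kT+\tfrac{B(\theta)}{k}\neq0$. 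Comparing with the last display, and using that both divisors have degree $0$, forces
\[
\operatorname{div}(g)=\sum_\theta n_\theta(P_\theta)-N\,(O),\qquad N=\sum_\theta n_\theta=\deg(B^2-4k^2C)
\]
($N=4$ if $\deg B=2$, and $N=3$ otherwise; this matches the direct evaluation of $\operatorname{ord}_O(g)$ from $\operatorname{ord}_O(X)=-2$, $\operatorname{ord}_O(Y)=-3$).

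Finally, $\operatorname{div}(g)$ being principal, $\sum_{P}\operatorname{ord}_P(g)\,P=O$ in the group $\E(\overline\Q(T))$ (a standard consequence of the group law); as the only pole is $O$, which contributes nothing, this reads precisely $\sum_\theta n_\theta P_\theta=O$, and by the remark above it holds already in $\E(\Q(T))$. The step needing care is the separation of $\operatorname{div}(g)$ from $\operatorname{div}(\widetilde g)$ — i.e. verifying that each $P_\theta$ appears in $\operatorname{div}(g)$ with multiplicity exactly $n_\theta$ and that no stray zero or pole appears — together with treating the cases $\deg B=2$ and $\deg B\le1$ uniformly; in the sub-case $B=0$ the statement collapses to the familiar fact that the three points of $\E$ lying on the line $Y=kT$ sum to $O$.
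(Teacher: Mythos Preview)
Your argument is correct and follows the same idea as the paper: both exhibit the function $g=Y-kT-\tfrac{B(X)}{2k}$ on $\E$, identify its divisor as $\sum_\theta n_\theta(P_\theta)-N\,(O)$, and conclude via the fact that the points in a principal divisor sum to $O$. The only difference is that the paper simply asserts the shape of $\operatorname{div}(g)$ (noting its degree is $3$ or $4$), whereas you justify the multiplicities carefully via the factorization $g\widetilde g=-\tfrac{1}{4k^2}(B^2-4k^2C)(X)$ and the nonvanishing of $\widetilde g$ at each $P_\theta$; this extra verification is a welcome addition rather than a departure in method.
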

\begin{proof}
The degree $d$ of the polynomial $B^2-4k^2C$ is 4 or 3 depending on the degree of $B$ ($\deg B= 2$ or $\leq 1$) and in any case the degree of $B^2-4k^2C$ is the degree of the function $Y-kT-\frac{B(X)}{2k}$ over the elliptic 
curve $E/\Q(T)$. The divisor of the function is then 
$$
\sum_{\theta} n_\theta[P_\theta] - d[O]
$$
and the claim follows.
\end{proof}

The proposition above also implies that whenever $B^2-4k^2C$ is the $\ell$-th power of an irreducible polynomial (hence $\ell\in\{3,4\}$ and $r=0$) then the sum of these points without multiplicity is a torsion point of order $\ell$. Notice that the order cannot be a proper divisor of $\ell$ since $P_\theta$ is different from $O$ and cannot have order $2$. This allows to find 
rank $0$ families of elliptic curves over $\Q(T)$ with a rational torsion point.
~\\
For example, we obtain families of curves $\E$ as in \eqref{familyCurve} with $A=k^2$ giving a 3-torsion point by imposing $B^2-4k^2C$ to be equal to $(X-w)^3$ up to a multiplicative constant (so $B$ has degree $\leq 1$). Then, $\E$ is given by the following 3 parameters equation
$$ 
\E \colon Y^2 = X^3 + \frac{48Tb_1k^4 + 24b_0b_1k^2 + b_1^4}{48k^4}X + \frac{1728T^2k^8 + 1728Tb_0k^6 + 432b_0^2k^4 - b_1^6}{1728k^6}
$$
where $b_0,b_1$ and $k$ are rational numbers. The point $\displaystyle P = \left( \frac{b_1^2}{12k^2},\frac{24k^4T+12b_0k^2+b_1^3}{24k^3}\right)$ is a 3-torsion point (this can be independently verified also by specific computations in PARI-GP, see Section~\ref{algrem}).\smallskip
~\\
Similarly, the families  of $\E$ as in \eqref{familyCurve} giving a 4-torsion point are the ones for which $B^2-4k^2C$ is equal to $(X-w)^4$ up to a multiplicative constant, and $\E$ must be given by the following 3 parameters equation
\begin{align*}
\E \colon Y^2 & =  X^3 + Tb_2X^2 + \frac{4k^4-3b_2^2b_1^2+4Tb_2^4b_1}{4b_2^4}X  \\ & + \frac{8k^6 + (-16b_2b_1 + 12Tb_2^3)k^4 + (9b_2^2b_1^2 - 12Tb_2^4b_1 +4T^2b_2^6)k^2 + (-b_2^3b_1^3 + Tb_2^5b_1^2)}{4b_2^6}  
\end{align*}
where $b_2,b_1$ and $k$ are rational. The point $\displaystyle P = \left( \frac{2k^2-b_2b_1}{2b_2^2}, \frac{4k^3+(-3b_2b_1+2Tb_2^3)k}{2b_2^3}\right)$ is a 4-torsion point.

Looking back at Equation~\eqref{Aequalsquare}, we  can also deduce the following curious result.
\begin{corol} Let ${\mathcal C} \colon Y^2=C(X)$ be an elliptic curve defined over $\Q$ with $C$ a monic polynomial of degree 3. Assume that ${\mathcal C}(\Q)$ has rank 0 and that ${\mathcal C}_{\text{tors}} = \{0\}$,
 then for every polynomial $B$ of degree $\leq 2$  and every $k \in \Z^*$ the polynomial $B^2-4k^2C$ is the power of an irreducible polynomial over $\Q(X)$.
\end{corol}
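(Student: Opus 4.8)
The plan is to derive this as a contrapositive-flavoured consequence of equation~\eqref{Aequalsquare} together with the torsion observation recorded just above the corollary. Concretely, fix a polynomial $B$ of degree $\leq 2$ and $k\in\Z^*$, and consider the elliptic surface $\E\colon Y^2=k^2 T^2 + B(X)T + C(X)$ (after completing the square in $T$, or simply as in~\eqref{familyCurve}); this is an elliptic curve over $\Q(T)$ with $A=k^2$ a non-zero square. By~\eqref{Aequalsquare} its rank over $\Q(T)$ is $r_\E=\omega(B^2-4k^2C)-1$. The key point is that this rank is forced to be $0$ by the arithmetic hypotheses on $\mathcal C$, after which $\omega(B^2-4k^2C)=1$ is exactly the assertion that $B^2-4k^2C$ is a power of a single irreducible polynomial.

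The heart of the argument is therefore to show $r_\E=0$. First I would invoke Silverman's specialization theorem: for all but finitely many $t\in\Q$, the specialized curve $\E_t/\Q$ has rank at least $r_\E$. So it suffices to exhibit infinitely many $t$ for which $\E_t(\Q)$ has rank $0$. The natural candidate is $t=0$: the specialization at $T=0$ is $Y^2=C(X)$, which is precisely $\mathcal C$, of rank $0$ by hypothesis. But one value is not enough against the finite exceptional set, so instead I would use that the generic curve is, up to the twist bookkeeping in Remark~\ref{rmktwist}, a quadratic twist family: the point is that $r_\E$ is an upper bound for $\mathrm{rank}\,\E_t(\Q)$ only after the exceptional set, so I need a cleaner route. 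The cleanest route is: the four (or three) points $P_\theta=\bigl(\theta, kT+\tfrac{B(\theta)}{2k}\bigr)$ of Proposition~\ref{sum_points} are, up to Galois conjugacy, the \emph{only} candidate generators coming from the proof of Theorem~\ref{mtt}, and Proposition~\ref{sum_points} shows they sum (with multiplicities) to $O$. If $r_\E\geq 1$, then $\omega(B^2-4k^2C)\geq 2$, meaning $B^2-4k^2C$ has at least two distinct irreducible factors; pick a root $\theta$ lying in one factor and a root $\theta'$ in another. Then $P_\theta$ specializes, at $T=0$, to a point on $\mathcal C$ over the field $\Q(\theta)$, namely $\bigl(\theta,\tfrac{B(\theta)}{2k}\bigr)$, which satisfies $Y^2=C(\theta)$ since $B(\theta)^2=4k^2C(\theta)$; pushing this down (summing over the Galois orbit, or rather using that the full multiplicity-weighted sum is $O$ but a proper sub-orbit sum is not, by the last sentence before the corollary) produces a non-torsion, hence infinite-order, rational point on $\mathcal C$ over $\Q$, contradicting $\mathrm{rank}\,\mathcal C(\Q)=0$ together with $\mathcal C_{\mathrm{tors}}=\{0\}$.

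Let me streamline that into the argument I would actually write. Suppose for contradiction that $B^2-4k^2C$ is \emph{not} a power of an irreducible polynomial, i.e.\ $\omega:=\omega(B^2-4k^2C)\geq 2$, and hence $r_\E=\omega-1\geq 1$. By the discussion following Proposition~\ref{sum_points}, the points $P_\theta$ without multiplicity sum to a torsion point only when $B^2-4k^2C$ is a prime power; since we are assuming it is not, the subgroup of $\E(\overline\Q(T))$ generated by the $P_\theta$ is genuinely of positive rank, and descent to $\Q(T)$ (tracking Galois orbits exactly as in the proof of Theorem~\ref{mtt} in Section~\ref{section4}) yields a non-torsion point $P\in\E(\Q(T))$ built from the $P_\theta$. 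Specializing at $T=0$ gives a point on $\mathcal C(\Q)$; because $P$ is non-torsion in $\E(\Q(T))$ and $\E_0=\mathcal C$, for all but finitely many specializations (and in particular we may choose a good one, Silverman again) the image is non-torsion in $\mathcal C(\Q)$. This contradicts $\mathrm{rank}\,\mathcal C(\Q)=0$ and $\mathcal C_{\mathrm{tors}}=\{0\}$, which together force $\mathcal C(\Q)=\{O\}$. Hence $\omega(B^2-4k^2C)=1$, which is the claim.

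The main obstacle, and the step I would be most careful about, is the descent/specialization bookkeeping: I must make sure that when I pass from the formal positivity $r_\E\geq 1$ to an actual non-torsion rational point of $\mathcal C(\Q)$, I am not secretly using independence of the $P_\theta$ (which the authors explicitly warn is not generally available). The safe formulation avoids this entirely: I only need \emph{one} non-torsion element of $\E(\Q(T))$, and $r_\E\geq 1$ gives that directly by definition of rank, no explicit points required. Then specialization at a single good $T=t_0$ (chosen outside the finite bad set of Silverman's theorem, and possible since that set is finite while $\Q$ is infinite) lands a non-torsion point in $\E_{t_0}(\Q)$; but I actually want it on $\mathcal C=\E_0$, so I should instead note that the rank of $\E_t(\Q)$ is $\geq r_\E\geq 1$ for all but finitely many $t$, apply this at \emph{some} $t$, and then observe that this contradicts nothing about $\mathcal C$ unless $\E_t\cong\mathcal C$. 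The correct fix is to observe that $\E$ itself, over $\Q(T)$, contains a non-torsion point, and its specialization at $T=0$ (which is $\mathcal C$) is non-torsion whenever $0$ is not a bad specialization — and $0$ being bad is something one can rule out, or circumvent by replacing $T$ with $T+t_0$ for a small good $t_0\in\Q$ with $\E_{t_0}\cong\mathcal C$ over $\Q$ only if $\E$ is isotrivial. Since in fact one checks $0$ is a good specialization for generic such $\E$, or more robustly since the statement is about all $B$ and the conclusion is insensitive to translating $T$, I would present the clean version: $r_\E\geq1$ yields, via Silverman and the infinitude of good specializations, a $t$ with $\mathrm{rank}\,\E_t(\Q)\geq1$; but for \emph{this} particular family every $\E_t$ is a quadratic twist of... — no. The genuinely clean line is the one I gave: $r_\E=0$ is equivalent to $\omega(B^2-4k^2C)=1$ by~\eqref{Aequalsquare}, and $r_\E=0$ holds because $\mathcal C=\E_0$ has rank $0$ and $0$ is not an exceptional specialization (this last fact is what needs a short verification, e.g.\ checking $\E$ is not isotrivial so Silverman applies and then that the height/injectivity at $0$ is fine), so that is the one real thing to check.
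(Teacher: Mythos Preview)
Your strategy is the paper's strategy: assume $\omega(B^2-4k^2C)\geq 2$, deduce $r_\E\geq 1$ from~\eqref{Aequalsquare}, and reach a contradiction by specializing at $T=0$ to land in $\mathcal C(\Q)=\{O\}$. The paper compresses this into two lines and simply asserts that specialization ``would give a rational point of $\mathcal C$.'' You are right to flag that the specialized point could a priori be $O$, but none of your attempted fixes actually close that gap, and the proposal ends by \emph{naming} the missing check (``$0$ is not an exceptional specialization\ldots that is the one real thing to check'') rather than performing it.

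Two of your detours are genuinely wrong, not just incomplete. First, invoking Silverman at a generic good $t$ is useless here: for $t\neq 0$ the fiber $\E_t$ is not isomorphic to $\mathcal C$ (the surface is non-isotrivial), so knowing $\rank\E_t(\Q)\geq 1$ says nothing about $\mathcal C$. Second, your appeal to ``the last sentence before the corollary'' misreads it: that sentence says the multiplicity-free sum of the $P_\theta$ is torsion \emph{when} $B^2-4k^2C$ is a pure prime power, not \emph{only} then, so it does not give you a non-torsion orbit sum in the case $\omega\geq 2$.

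The clean way to finish---and what makes the paper's terse sentence honest---is to bypass abstract specialization entirely and use the explicit points. Every root $\theta$ of $B^2-4k^2C$ satisfies $C(\theta)=(B(\theta)/2k)^2$, so $Q_\theta:=\bigl(\theta,\,B(\theta)/2k\bigr)$ is a \emph{finite} point of $\mathcal C(\Q(\theta))$. If some $\theta$ is rational this already contradicts $\mathcal C(\Q)=\{O\}$. Otherwise every irreducible factor of $B^2-4k^2C$ has degree $\geq 2$; since $\deg(B^2-4k^2C)\leq 4$, the only factorization with $\omega\geq 2$ is a product of two distinct irreducible quadratics. Take one of them, with conjugate roots $\theta\neq\theta'$: then $Q_\theta+Q_{\theta'}\in\mathcal C(\Q)$, and it is not $O$ because the two summands have distinct $x$-coordinates (so neither is the negative of the other). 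This is the non-trivial rational point the paper is pointing at.
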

\begin{proof}
If this was false, by~\eqref{Aequalsquare} the curve $\E$ defined by $Y^2=k^2T^2+BT+C$ would be a non-isotrivial elliptic curve over $\Q(T)$ with rank at least 1, but specialization of $T$ at $T=0$ would give a rational point of ${\mathcal C}$. 
\end{proof}

\begin{ex}
 For every $b_1, b_0$ in $\Z$ and $k$ in $\Z\setminus\{0\}$, the polynomial
$$
- 4k^2X^3 + b_1^2X^2 + (2b_0b_1 - 4k^2)X + (b_0^2 - 20k^2)
$$
is the power of an irreducible polynomial since $Y^2=X^3+X+5$ has no rational points except~$0$ (we have chosen $B$ of degree 1). Actually, this polynomial is irreducible, since otherwise it would be a third power, and a quick computation shows that this is not possible since the polynomial is homogeneous in $k, b_0$ and $b_1$ and $-4$ is not a cube in $\Q$. 
\end{ex}

\subsubsection{The case $A$ is a non-square}
Let us assume now  that $A(X)=k$ is not a square in $\Q$. In this case Theorem~\ref{mtt} gives 
\begin{equation}\label{Aequalnonsquare}
r =  \sigma(k,B^2-4AC).
\end{equation}
The definition of $\sigma(\cdot,\cdot)$ implies that $r\leq 2$, and the equality is achieved whenever $B^2-4kC= F_1\cdot F_2$ with $F_1, F_2$ quadratic irreducible (with discriminants $D_{F_1}$ and $D_{F_2}$) such that $D_{F_1}k$ and $D_{F_2}k$ are squares in $\Q$. 
\
\begin{ex} Let us consider 
\begin{equation}\label{example_2}
\E \colon Y^2 = X^3 + TX^2 + (3T-2)X+7T^2-2T
\end{equation}
Then, we have $A=k=7$, $B=X^2+3X-2$ and $C=X^3-2X$. Thus, 
$$
B^2-4AC = X^4 - 22X^3 + 5X^2 + 44X + 4
$$
is an irreducible polynomial of degree 4. If $\theta$ is a root of $P$ then $k=7$ is square in $K=\Q(\theta)$. Indeed, $7=\ell(\theta)^2$ where
$$
\ell(\theta)= \frac{1}{8} \theta^3 - \frac{11}{4}\theta^2 +\frac{7}{8} \theta +\frac{11}{4}.
$$
Furthermore, the field $\Q(\sqrt{7})$ is the only subfield of $K$ of degree 2 and hence the rank of $E$ is~1. 
We obtain the point $P_\theta= \left(\theta, \ell(\theta)T+\frac{B(\theta)}{2\ell(\theta)}\right)$. The sum of the four conjugate points gives
 $$
 \left(\frac{T^4 + 22T^3 + 5T^2 - 12T + 4}{16T^2}, \frac{-T^6 - 41T^5 - 245T^4 + 73T^3 + 42T^2 - 36T + 8}{64T^3}\right) = 2\left(-T,2T\right)
 $$
 in $\Q[T]$.  In fact, if $\theta_1, \theta_2 \in \C$ are roots of $B^2-4AC$ such that $\ell(\theta_1) = \ell(\theta_2)$ then $P_{\theta_1} + P_{\theta_2} = (-T,2T)$ which is easily verified to be of infinite order.\end{ex}

\subsection{The case $A$ has degree $1$ or $2$}\label{Adegresup0}
If $\deg(A)\in\{1,2\}$ then by Theorem~\ref{theoconjclass} we have
\est{
r\leq r_+:=\Omega(M_{(B^2-4AC)^*,A})-\omega(B^2-4AC) -\Upsilon_{B^2-4AC,A}
}
since $\Omega(M_{\gcd(A,B^*),C})\leq 2\deg(\gcd(A,B^*))$, $\omega(\gcd(A,B,C)) \leq2\deg(\gcd(A,B^*,C)) $ and since $\Upsilon_{B^2-4AC,A}=0$ if $\deg(A)=1$.
In the following proposition, we show that $r_{+}  \leq \deg((B^2-4AC)^*)$, so that in particular we have $r\leq 4$ if $A$ has degree 1 and $r\leq 5$ if $A$ has degree $2$. Moreover, these equalities are achieved whenever $B^2-4AC$ splits as a product of 4 (or 5) distinct linear  factors in $\Q[X]$ such that $A(\rho)$ is a square in $\Q$ for every root $\rho$ of $B^2-4AC$.

\begin{prop}\label{propEstimateRank} 
If $\deg(A)=1$ or $2$ then 
$$
 r_{+}  \leq \deg((B^2-4AC)^*).
 $$
 Furthermore, we have $r_{+}=\deg((B^2-4AC)^*)$ if and only if $(B^2-4AC)^*$ is a product of irreducible polynomials over $\Q$ of degree 1  such that $A(\rho)$ is a square for all roots $\rho$ of $(B^2-4AC)^*$. 
\end{prop}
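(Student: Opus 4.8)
\emph{Proof proposal.} The plan is to convert the bound into a statement about each irreducible factor of $B^2-4AC$ separately, using the multiplicativity of the resultant. Set $G:=B^2-4AC$ and factor $G=c\prod_iF_i^{e_i}$ with $c\in\Q^\times$ and the $F_i$ distinct monic irreducibles, so that $G^*=\prod_iF_i$ and $\deg(G^*)=\sum_i\deg F_i$. Since $M_{PQ,A}=M_{P,A}M_{Q,A}$ up to a constant, while $\omega$ and $\Upsilon_{\,\cdot\,,A}$ are additive over the $F_i$, one gets $r_+=\sum_i\big(\Omega(M_{F_i,A})-1-\Upsilon_{F_i,A}\big)$. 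It then suffices to prove, for every monic irreducible $F\mid G$ of degree $d$ with a root $\rho$, the per-factor estimate
\[
\Omega(M_{F,A})-1-\Upsilon_{F,A}\ \le\ d,
\]
with equality exactly when $d=1$ and $A(\rho)$ is a square in $\Q$. Summing over $i$ gives $r_+\le\deg(G^*)$, and in the equality case forces every $F_i$ to be linear with $A(\rho_i)$ a square in $\Q$; conversely, for such a configuration $M_{F_i,A}(X)=X^2-A(\rho_i)$ has $\Omega=2$ and $\Upsilon_{F_i,A}=0$, so each factor contributes $2-1-0=1=\deg F_i$ and $r_+=\deg(G^*)$.

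For the per-factor estimate I would split according to Lemma~\ref{factors}. If $A(\rho)$ is not a square in $\Q(\rho)$, then $M_{F,A}(X)=K(X^2)^e$ with $K(X^2)$ irreducible and $e\in\{1,2\}$, hence $\Omega(M_{F,A})=e\le 2$; since $\Upsilon_{F,A}\ge 0$ (each branch of its definition being manifestly nonnegative, e.g.\ because it is the limiting average of the nonnegative counts $2N_{\gcd(P_1,P_2-\ell^2),2}(p)$ appearing in the proof of Proposition~\ref{rp4}) the left side is $\le 1\le d$, and equality would force $d=1$ with $e=2$, impossible since $e=2$ requires $d$ even. If $A(\rho)$ is a square in $\Q(\rho)$ and $F$ is not an affine function of $A$, then $A(\rho)\notin\Q$ whenever $d\ge 2$ (otherwise $F\mid A-A(\rho)$ with $\deg(A-A(\rho))<d$ would force $A$ constant), Lemma~\ref{factors} gives $\Omega(M_{F,A})=2e\le 4$, and $2e-1-\Upsilon_{F,A}\le 3$; this is $\le d$ with equality only for $d=1$ (then $e=1$, $\Upsilon_{F,A}=0$, $\Omega(M_{F,A})=2$, i.e.\ $A(\rho)$ a square in $\Q$), the only point needing care being $d=4$, $e=2$, where the degree-$4$ branch of $\Upsilon$ — nonnegative as above — still keeps the inequality strict.

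The remaining case, $F$ an affine function of $A$, i.e.\ $d=2$ and $F=\tfrac1s(A-m)$ with $m=A(\rho)\in\Q$ and $s$ the leading coefficient of $A$, is where the estimate is tight, and is the step I expect to be the main obstacle. Here $M_{F,A}(X)=(X^2-m)^2$ directly from the definition of the resultant. If $m\notin\Q^2$, then $X^2-m$ is irreducible, $\Omega(M_{F,A})=2$, and the bound is clear; if $m$ is a nonzero square, then $\Omega(M_{F,A})=4$, but the first branch of the definition of $\Upsilon$ applies with $t=0$ and $u=m$, and since $\D_F\notin\Q^2$ it gives $\Upsilon_{F,A}=1-\delta_m+\square(m)-\square(m\D_F)=2$, so $4-1-2=1<2$. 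The genuinely delicate possibility is $m=0$, equivalently $A$ irreducible of degree $2$ with $A\mid B$: then $M_{F,A}(X)=X^4$, $\Upsilon_{F,A}=0$, and the per-factor quantity is $3>2$, so the factor-by-factor argument alone does not close. Recovering $r_+\le\deg(G^*)$ in this configuration requires an extra ingredient — for instance first completing the square in $T$ to normalise $B$ and then showing that the factors of $G^*$ other than $A$ cannot all be linear with $A$-value a square, so that the global sum absorbs the excess, or ruling this configuration out of the hypotheses — and making this last step precise is the heart of the proof.
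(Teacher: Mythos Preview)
Your overall approach is identical to the paper's: write
\[
r_+=\sum_{\substack{F\mid B^2-4AC\\ F\text{ irr.}}}\bigl(\Omega(M_{F,A})-1-\Upsilon_{F,A}\bigr),
\]
and then bound each summand by $\deg F$ using Lemma~\ref{factors}. The paper's treatment of the only tight case, $\deg F=2$ with $\Omega(M_{F,A})=4$, is exactly the one you describe: Lemma~\ref{factors} forces $F=P(A)$ with $P$ linear, so $A\equiv m\pmod F$ with $m\in\Q$ a square in $\Q(\rho)$; writing $A=\lambda F+c^2$, the first branch of~\eqref{dfups} gives $\Upsilon_{F,A}=2$, hence the contribution is $4-1-2=1<2$. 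That is the whole argument in the paper.

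The edge case you single out, $m=0$ (i.e.\ $F$ proportional to $A$, which happens exactly when $A$ is irreducible of degree~$2$ and $A\mid B$), is \emph{not} handled in the paper: the paper simply writes ``$A=\lambda F+c^2$'' and proceeds as though $c\neq 0$. Your diagnosis is correct --- with $t=u=0$ one gets $\Upsilon_{F,A}=0$ and a per-factor contribution of $3>2$ --- and the fixes you sketch (completing the square, arguing that other factors absorb the excess) do not rescue the statement for $r_+$. For instance, with $A=X^2+1$, $B=X^2+1$, and $C=X(X-\tfrac34)(X-\tfrac43)+\tfrac14(X^2+1)$, one has $(B^2-4AC)^*=(X^2+1)\,X\,(X-\tfrac34)(X-\tfrac43)$ of degree~$5$, while a direct computation gives $r_+=6$. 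So the inequality $r_+\le\deg((B^2-4AC)^*)$ genuinely fails in this configuration; the paper's proof overlooks it rather than resolving it. (The bound for the actual rank $r$ is not affected, since when $A$ is irreducible and $A\mid B$ the term $-2\deg(\gcd(A,B^*))=-4$ in the full formula for $r$ more than compensates; but that is about $r$, not $r_+$.)
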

\begin{proof}
By definition we have 
\est{
r_+=\sum_{F|B^2-4AC,\atop \text{$F$ irr.}}(\Omega(M_{F,A})-1 -\Upsilon_{F,A}).
}
Clearly if $\deg(F)=1 $, then $\Omega(M_{F,A})-1 -\Upsilon_{F,A}=\Omega(M_{F,A})-1 \leq1$. Furthermore, the equality is achieved if and only if $A(\rho)$ is a square where $\rho$ is the root of $F$. 
In particular, the proposition follows if we can show that $\Omega(M_{F,A})-1 -\Upsilon_{F,A}<\deg(F)$ for $2\leq \deg(F)\leq 5$.  
In order to see this, we first notice that by Lemma~\ref{factors} we have $\Omega(M_{F,A})\in\{1,2,4\}$ if $\deg(F)\in\{2,4\}$ and $\Omega(M_{F,A})\in\{1,2\}$ if $\deg(F)\in\{3,5\}$. This readily implies that $\Omega(M_{F,A})-1 -\Upsilon_{F,A}<\deg(F)$ unless $\deg(F)=2$ and $\Omega(M_{F,A})=4$. By Lemma~\ref{factors} this happens whenever $F=P(A)$ with $P$ of degree 1 (and so $A$ must be of degree 2) and $A(\rho)$ a square in $\Q(\rho)$ for any roots $\rho$ of $F$. In this case then the Euclidean division of $A$ by $F$ has the form $A=\lambda F + c^2$, with $\lambda, c\in\Q, \lambda\neq0$ and thus, by~\eqref{dfups}, we  have $\Upsilon_{F,A}=2$. In particular, $\Omega(M_{F,A})-1 -\Upsilon_{F,A}=1<\deg(F)$, as desired.
\end{proof}

As mentioned above, we achieve the maximal rank $r=5$ if $\deg(A)=2$ and $B^2-4AC$ splits in $\Q[X]$ as a product of 5 distinct linear factors all of  whose roots $\rho$ are such that $A(\rho)$ is a square in $\Q$. We shall now show how to construct several families satisfying these conditions.

Up to a linear change on the variable $X$, we can assume that $B^2-4AC$ has the form $\ell X(X-1)(X-r_3)(X-r_4)(X-r_5)$. Furthermore $A(0)$ and $A(1)$ must be squares and for simplicity we assume that 
$A(X)=X(X-1)+ k^2$ for some $k\in \Z$. Also, we write 
\begin{align*}
B(X)&=b_2X^2+b_1X+b_0 ; \\
C(X)&= X^3+c_2X^2+c_1X+c0; \\
\Delta(X) &= \ell  X(X-1)(X-r_3)(X-r_4)(X-r_5).
\end{align*}
We now want to solve $P=\Delta - (B^2-4AC) = 0$. We can do this via the following steps. 

\begin{itemize}
\item We ensure that the degree 5 term in $X$ of P is zero by taking $\ell =-4$.
\item We choose the coefficients $c_2$, $c_1$ and $c_0$ in order to delete the coefficients of $X^4$, $X^3$ and $X^2$ (at this step $b_2$ has to be different from $-b_1$).
\item We choose the coefficient $b_0$ in order to delete the coefficient of $X$.
\end{itemize}
After these steps, we are left with 
\begin{scriptsize}
\begin{align*}
2(b_1+b_2)B(X)&= (2b_2^2 + 2b_1b_2)X^2 + (2b_1b_2 + 2b_1^2)X +{}\\
& \quad +\big(-4k^4 + (2b_2^2 + 2b_1b_2 + (4r_4 + 4r_5 - 4)r_3 + (4r_5 - 4)r_4 -4r_5 + 4)k^2 
 - (b_2^2 + 2b_1b_2 + b_1^2)\big), \\
4(b_1+b_2)C(X)&=4(b_1 + b_2)X^3 + (b_2^3 + b_1b_2^2 -4 (r_3 +r_4 +r_5)(b_2 + b_1))X^2+{}\\
&\quad + \big(-4(b_1+b_2)k^2 + b_2^3 + 3b_1b_2^2 + 2b_1^2b_2+4( r_3r_4 + r_3r_5 + r_4r_5)(b_1+b_2)\big)X +{}\\
& \quad-4b_2k^4 + (b_2^3 + b_1b_2^2 + 4(r_3r_4 + r_3r_5 + r_4r_5)b_2 + 4(r_3 + r_4 + r_5 - 1)b_1)k^2+{}\\
&\quad  + b_1b_2^2 + 2b_1^2 b_2 + b_1^3 - 4r_3r_4r_5(b_1+b_2).
\end{align*}
\end{scriptsize}%
Moreover, to impose the condition that $A(r_3)$, $A(r_4)$ and $A(r_5)$ must be squares, we take $r_i$ of the form $(2m_ik+1)/(1-m_i^2)$ for $m_i \in \Q$ so that $A(r_i)=s(r_i)^2$ with
$$ s(r_i):=\frac{(1+m_i^2)k+m_i}{1-m_i^2}.$$
Substituting these values of $r_i$ in $P$, we see that $P$ factors as $P=P_1\cdot P_2$ for some (long) polynomial expressions $P_1$ and $P_2$
 (the factorization was done in Magma). We now make the simplifying assumption $m_4=1/m_3$ which allows for a further factorization of $P_1$ into $(m_5+1)\tilde{P_1}$ with $\tilde{P_1}$ of degree 1 in $m_5$. We choose $m_5$ to be the solution of $\tilde P_1$, that is $m_5= \frac{q-p}{q+p}$ with %
 \begin{scriptsize}
\begin{align*}
p&= 4(m_3^4 +2m_3^2 +1)k^4 +8 (m_3^3 +m_3)k^3 + 4m_3^2k^2 \\
q&=-2 \big((m_3^4 -2m_3^2 +1)b_2^2 +2 (m_3^4 -2m_3^2 +1)b_1b_2 + (m_3^4 -2m_3^2 +1)b_1^2\big)k+{} \\
&\quad+ (m_3^4 - 2m_3^2 + 1)b_2^2 + 2(m_3^4 - 2m_3^2 + 1)b_1b_2 + (m_3^4 - 2m_3^2 + 1)b_1^2.
\end{align*}
\end{scriptsize}%
obtaining the desired equality $P=0$. Thus, we obtain the 4-parameters elliptic curve over $\Q(T)$ given by
\begin{scriptsize}
\begin{align*}
A(X)&=X(X-1)+k^2,\\
2(b_1+b_2)(m_3^2-1)^2B(X)&=2\big((m_3^4 - 2m_3^2 + 1)b_2^2 + (m_3^4 - 2m_3^2 + 1)b_1b_2\big)X^2 + 2\big((m_3^4 - 2m_3^2 + 1)b_1b_2 + (m_3^4 - 2m_3^2 + 1)b_1^2\big)X+{}\\
&\quad -4(m_3^4 +2 m_3^2 +1)k^4 -8 (m_3^3 +m_3)k^3 + 2\big((m_3^4 - 2m_3^2 + 1)b_2^2 + (m_3^4 - 2m_3^2 + 1)b_1b_2 - 2m_3^2\big)k^2 +{}\\
&\quad - \big((m_3^4 - 2m_3^2 + 1)b_2^2 + 2(m_3^4 - 2m_3^2 + 1)b_1b_2 + (m_3^4 - 2m_3^2 +1)b_1^2\big),\\
C(X)&= \frac{B(X)^2+4X(X-1)(X-r_3)(X-r_4)(X-r_5)}{4A(X)},
\end{align*}
\end{scriptsize}%
where $r_3=\frac{2m_3k+1}{1-m_3^2}$, $r_4=1-r_3$ (this follows from the choice $m_4=1/m_3$) and $r_5=\frac{2m_5k+1}{1-m_5^2}$. By construction, for generic choices of the parameters the resulting elliptic curve has rank $5$ over $\Q(T)$. Moreover, one can also explicitly give 5 rational points:
\est{
&P_1=\left(0,kT+\frac{B(0)}{2k} \right), \quad
P_2=\left(1,kT+\frac{B(1)}{2k} \right), \quad
P_3=\left(r_3, s(r_3)T + \frac{B(r_3)}{2s(r_3)}\right), \\
&P_4=\left(r_4, s(r_4)T + \frac{B(r_4)}{2s(r_4)}\right), \quad
P_5=\left(r_5, s(r_5)T + \frac{B(r_5)}{2s(r_5)}\right).
}
We have not tried to prove that these 5 points are independent in general. However, we verified that when we specialize at the values $b_2=b_1=k=T=1$ and $m_3=2$, the points above are independent on the corresponding elliptic curve over $\Q$.

\section{Families of different type}\label{difftype}
The techniques employed in this work can provide results also for families of elliptic curves different from those described in the previous sections. Indeed, one can also study some families with $\deg(A)=3$ allowing for the larger rank $r=6$, and generalizing work of~\cite{arms_all} who constructed some special families of this shape. One can also study the rank of twists of~\eqref{familyCurve} recovering and generalizing results in~\cite{bettinDavidDelaunay}, see Remark~\ref{rmktwist}.

\subsection{Families of rank 6} 
Let us consider the curve defined over $\Q(T)$ as
\begin{equation}\label{rank6curve}
    \mathcal{C}: Y^2 = A(X)T^2 + B(X)T + C(X)
\end{equation}
with $A=L^3$, $\deg L=1$ and $\deg B, \deg C\leq 3$ (now the polynomial $C$ is not necessarily monic). Curves of this shape were considered in \cite{arms_all}.
Denoting by $a,b$ and $c$ the leading coefficient of $L,B$ and $C$ respectively and reversing the role of $X$ and $T$, we see that the curve $\mathcal{C}$ can be written as
\begin{align*}
    Y^2 = (a^3T^2+bT+c) X^3+p_2(T) X^2+p_4(T)X+p_6(T),
\end{align*}
with $\deg_T(p_i)=2$ for $i=2,4,6$. Multiplying both sides by $(a^3T^2+bT+c)^2$ and setting the change of variables $(a^3T^2+bT+c)y\to y $, $(a^3T^2+bT+c)X\to X$, the curve is now defined by the equation in Weierstrass form
\begin{equation*}
    Y^2 = X^3 + q_2(T)X^2 + q_4(T)X+q_6(T),
\end{equation*}
with $\deg_T (q_i)\leq i$ for $i=2,4,6$: this condition assures that $\mathcal{C}$ is a rational elliptic surface, and thus we can compute its rank $r$ over $\Q(T)$ using Nagao's formula. In particular, $r$ can be computed using~\eqref{msfr}.

\begin{theo}
Let $\mathcal{C}$ be a family of elliptic curves as in \eqref{rank6curve}, and let $r$ be its rank over $\Q(T)$. Then 
\begin{equation}\label{degree1rank6}
r = \Omega(M_{(B^2-4AC)^*,L})- \omega(B^2-4AC) -\Xi_{L,B,C}.
\end{equation}
where $\Xi$ is as in~\eqref{dfxi}.
\end{theo}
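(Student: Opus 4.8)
The plan is to reduce the computation of $r$ for the curve $\mathcal{C}$ in~\eqref{rank6curve} to the case $\deg(A)=1$ that was already settled in Theorem~\ref{mtt}. The excerpt has already shown that, after reversing the roles of $X$ and $T$, clearing denominators by $(a^3T^2+bT+c)^2$, and performing the change of variables $(a^3T^2+bT+c)X\to X$ and $(a^3T^2+bT+c)Y\to Y$, the curve $\mathcal{C}$ becomes a rational elliptic surface in Weierstrass form with $\deg_T(q_i)\le i$, so that Nagao's formula (hence~\eqref{msfr}) applies. The point is that~\eqref{msfr} and the intermediate analysis of Section~\ref{firstdecomp} express $r$ entirely in terms of the two Legendre sums $S_{\gcd(\widetilde A,\widetilde B),\widetilde C}(p)$ and $S_{\widetilde B^2-4\widetilde A\widetilde C,\widetilde A}(p)$ attached to the Weierstrass coefficients $\widetilde A(X)=a^3T^2+bT+c$, $\widetilde B$, $\widetilde C$ viewed as polynomials in the new variable $X$ (the former role of $T$) with coefficients in $\Q[T]$. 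Here $\widetilde A$ plays the role of a linear polynomial $L$ in the variable $X$: indeed reversing $X\leftrightarrow T$ turns $A(X)=L(X)^3$ with $\deg L=1$ into a cubic in the new variable whose discriminant behaviour is governed by the linear factor $L$. So first I would make this dictionary precise and then simply invoke the $\deg(A)=1$ branch of Theorem~\ref{mtt}.

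The key steps, in order, are as follows. First, carry out the substitution carefully and identify the triple of polynomials to which Theorem~\ref{mtt} is to be applied, checking that the discriminant $B^2-4AC$ of the original curve transforms (up to squares, which do not affect any of the quantities $\Omega(M_{\bullet,\bullet})$, $\omega(\bullet)$, $\Xi$) into the relevant discriminant of the new model; one should track that the resultant $M_{(B^2-4AC)^*,L}$ is the correct object because $\Omega(M_{P_1,P_2})=\Omega(M_{\lambda P_1,wP_2})$ as noted in Remark~\ref{rmktwist}, so the leading-coefficient juggling from clearing denominators is harmless. Second, observe that because $A=L^3$, the factor $\gcd(A,B^*)$ appearing in the general $\deg(A)=1$ formula is just $\gcd(L,B^*)$, i.e. it is non-trivial precisely when $L\mid B$; this is exactly the dichotomy encoded by $\Xi_{L,B,C}$ in~\eqref{dfxi}, where the term $\square(P_2(\alpha))=\square(C(\alpha))$ matches the contribution $\square(C(\alpha))$ that shows up in the $A\mid B$, $\deg(A)=1$ line of the proof of Theorem~\ref{mtt}. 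Third, read off the formula: the $\deg(A)=1$ case of Theorem~\ref{mtt} gives $r=\Omega(M_{(B^2-4AC)^*,A})-\omega(B^2-4AC)-\Xi_{A,B,C}$, and after the translation this is precisely~\eqref{degree1rank6} with $A$ replaced by $L$.

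The main obstacle I expect is bookkeeping rather than conceptual: one must verify that the Legendre sums built from the transformed Weierstrass model really do coincide (for all sufficiently large $p$, which is all that matters for the limit) with $S_{\gcd(L,B^*),C}(p)$ and $S_{(B^2-4AC)^*,L}(p)$ in the original variables, despite the fact that the change of variables $(a^3T^2+bT+c)X\to X$ is not invertible at the finitely many $X$-values where $a^3T^2+bT+c$ vanishes. Since Nagao's formula is insensitive to finitely many primes and to finitely many bad specializations, and since the two models are isomorphic over $\Q(T)$ away from this closed subscheme, this should cause no genuine trouble, but it is the point that needs a careful—if short—argument. One also has to confirm that the hypotheses on degrees in Proposition~\ref{rp4} (each irreducible factor $F$ of the first argument has $\min(\deg F,\deg P_2)\le 2$ and $\deg F\le 5$) are met here: with $P_2=L$ linear this is automatic, so the $\Upsilon$ terms all vanish and only the $\Xi$ term survives, exactly as in~\eqref{degree1rank6}.
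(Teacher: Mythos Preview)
Your proposal has a genuine gap: you cannot ``invoke the $\deg(A)=1$ branch of Theorem~\ref{mtt}'' here, because neither the original curve $\mathcal C$ nor the transformed Weierstrass model satisfies the hypotheses of Theorem~\ref{mtt}. For $\mathcal C$ one has $\deg A=3$, and for the Weierstrass model one only knows $\deg_T q_i\le i$ (so $q_6$ may have degree up to $6$ in $T$), whereas Theorem~\ref{mtt} requires all coefficients to have degree $\le 2$ in $T$. The passage in the paper about ``reversing the role of $X$ and $T$'' and the change of variables $(a^3T^2+bT+c)X\to X$ is there \emph{only} to verify that $\mathcal C$ is a rational elliptic surface, so that Nagao's conjecture is a theorem; it is not a reduction device, and your attempt to build a dictionary with polynomials $\widetilde A,\widetilde B,\widetilde C$ in a swapped variable does not make sense (e.g.\ your $\widetilde A(X)=a^3T^2+bT+c$ is not a polynomial in $X$ at all, and the curve over $\Q(X)$ obtained by genuinely swapping the roles of $X$ and $T$ has a different rank from the one being computed).

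The paper's proof is far more direct and avoids all of this. Once one knows $\mathcal C$ is a rational elliptic surface, the derivation of~\eqref{msfr} in Section~\ref{firstdecomp} applies verbatim: that computation only uses that the right-hand side of~\eqref{rank6curve} is quadratic in $T$ (to evaluate the inner sum over $t$), not the degree bounds on $A,B,C$ in $X$. One then observes the single algebraic point that drives everything: since $A=L^3$, the Legendre symbol satisfies $\bigl(\tfrac{A(x)}{p}\bigr)=\bigl(\tfrac{L(x)^3}{p}\bigr)=\bigl(\tfrac{L(x)}{p}\bigr)$, so $S_{B^2-4AC,A}(p)=S_{B^2-4AC,L}(p)$; likewise $\gcd(A,B)$ and $\gcd(L,B)$ have the same roots mod $p$, and $\square(L^3)=0=\square(L)$. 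Thus~\eqref{msfr} becomes
\[
r=\lim_{Q\to\infty}\frac{\log Q}{Q}\sum_{p\leq Q}\bigl(S_{\gcd(L,B),C}(p)+S_{B^2-4AC,L}(p)\bigr)-\square(L),
\]
which is formally identical to the $\deg(A)=1$ line in Section~\ref{section4}. Now one re-runs (not cites) that part of the proof, applying Propositions~\ref{rp2} and~\ref{rp4}; since $L$ is linear, the degree hypotheses of Proposition~\ref{rp4} are automatically met and all $\Upsilon$ terms vanish, leaving exactly~\eqref{degree1rank6}. No change-of-model bookkeeping is needed.
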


\begin{proof}
The proof is essentially identical to that of Theorem~\ref{mtt}, since in this case~\eqref{msfr} can be rewritten as 
\begin{align*}
r&=\lim_{Q\to\infty}\frac{\log Q}Q\sum_{p\leq Q}(S_{\gcd(L,B),C}(p)+S_{B^2-4AC,L}(p))-\square(L).\qedhere
\end{align*}
\end{proof}
\noindent
Just like for Proposition \ref{propEstimateRank}, we can employ Lemma \ref{factors} to obtain an upper bound for $r$.

\begin{corol}
Let $\mathcal{C}$ be as in \eqref{rank6curve}. Then
$r\leq \deg((B^2-4AC)^*)-\Xi_{L,B,C}. $ 
Furthermore, the equality holds if and only if $(B^2-4AC)^*$ is a product of degree $1$ polynomials over $\Q$ such that $L(\rho)$ is a square for all roots $\rho$ of $(B^2-4AC)^*$.  
\end{corol}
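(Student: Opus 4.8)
The plan is to follow the proof of Proposition~\ref{propEstimateRank}, starting from the formula~\eqref{degree1rank6} of the preceding theorem. Put $D:=B^2-4AC$ and write $D^*=\prod_i F_i$ as a product of distinct monic irreducible polynomials. Since the resultant is multiplicative in its first argument, $M_{D^*,L}=\prod_i M_{F_i,L}$, so $\Omega(M_{D^*,L})=\sum_i\Omega(M_{F_i,L})$; as $\omega(D)$ equals the number of the $F_i$, formula~\eqref{degree1rank6} becomes
\est{
r=\sum_{F|D,\atop F\text{ irr.}}\big(\Omega(M_{F,L})-1\big)-\Xi_{L,B,C}.
}
It is therefore enough to show $\Omega(M_{F,L})-1\le\deg F$ for every irreducible factor $F$ of $D$, together with a description of when equality holds simultaneously for all of them; the term $\Xi_{L,B,C}$ depends only on $L,B,C$ and is unaffected by these manipulations.

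For a fixed monic irreducible $F$ of degree $d$ with a root $\rho$, I would apply Lemma~\ref{factors} to the pair $(F,L)$. Because $\deg L=1$, the alternative ``$e=2$'' of that lemma --- which forces the second polynomial to have degree $2$ --- cannot occur, so $M_{F,L}$ is either $(-1)^d K(X)K(-X)$ with $K$ irreducible (when $L(\rho)$ is a square in $\Q(\rho)$), or $K(X^2)$ with $K(X^2)$ irreducible (otherwise); in both cases $\Omega(M_{F,L})\in\{1,2\}$, the value being $2$ precisely when $L(\rho)$ is a square in $\Q(\rho)$. Hence $\Omega(M_{F,L})-1\le1\le d=\deg F$. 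This inequality is strict as soon as $\deg F\ge2$; and when $\deg F=1$, so that $F=X-\rho$ with $\rho\in\Q$ and $\Q(\rho)=\Q$, it is an equality exactly when $L(\rho)$ is a square in $\Q$.

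Summing over the irreducible factors of $D$ yields $\sum_F(\Omega(M_{F,L})-1)\le\sum_F\deg F=\deg(D^*)$, whence $r\le\deg((B^2-4AC)^*)-\Xi_{L,B,C}$. Since equality in a sum of term-wise inequalities forces equality in each term, it holds if and only if every irreducible factor of $B^2-4AC$ is linear and $L(\rho)$ is a square in $\Q$ for each of its roots $\rho$, which is exactly the stated condition. I do not foresee a genuine obstacle here: the argument runs parallel to, and is in fact simpler than, that of Proposition~\ref{propEstimateRank}, because $\deg L=1$ rules out the one delicate configuration appearing there --- a degree-$2$ factor $F$ with $\Omega(M_{F,A})=4$ --- which can occur only when the second argument of $M$ has degree $2$. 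The single point deserving a word of care is the degenerate possibility $L(\rho)=0$: this occurs exactly when $L\mid B$, in which case $0$ is trivially a square, the factor $F=L$ still contributes the full amount $\deg F=1$ to the sum, and $\Xi_{L,B,C}=1-\square(C(\alpha))$, so this case is consistent with the equality condition as stated.
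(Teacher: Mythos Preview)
Your proof is correct and follows exactly the route the paper indicates: the paper merely says ``Just like for Proposition~\ref{propEstimateRank}, we can employ Lemma~\ref{factors} to obtain an upper bound for $r$,'' and you have supplied precisely those details, correctly noting that $\deg L=1$ forces $e=1$ in Lemma~\ref{factors} and hence $\Omega(M_{F,L})\in\{1,2\}$ for every irreducible factor $F$, which is the simplification over Proposition~\ref{propEstimateRank} you identify.
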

\noindent
In particular, if $B$ and $C$ are cubic polynomials, we can obtain families of elliptic curves with rank 6 whenever $B^2-4AC$ splits as a product of 6 coprime linear factors over $\Q$ and $L(\rho)$ is a square for every root $\rho$ of $B^2-4AC$. This recovers and generalizes some results in \cite{arms_all}.

\begin{ex} 
Let us consider the polynomials
\begin{align*}
A(X)&=X^3,\\
B(X)&=b_3X^3+b_2X^2+b_1X+b_0 , \\
C(X)&= c_3X^3+c_2X^2+c_1X+c_0, \\
\Delta(X) &= \ell  (X-r_1^2)(X-r_2^2)(X-r_3^2)(X-r_4^2)(X-r_5^2)(X-r_6^2).
\end{align*}
with the numbers $r_i$ being positive and distinct integers and $b_1,b_2,b_3,c_1,c_2,c_3\in\Q$ with $b_3\neq0$.
It is then possible to solve $P=\Delta-(B^2-4AC)=0$ by applying linear substitutions which determine the values of $\ell, b_2, b_1$ (with $\ell\neq0$) and all the parameters of $C$, with $b_3$, $b_0$ and $r_i$ as remaining free 8 parameters with the following 6 rational points 
\begin{equation*}
    P_i =\left(r_i, r_i^3 T + \frac{B(r_i)}{2 r_i^3}\right),\hspace{0.2cm}i=1,\ldots,6.
\end{equation*}

As an example, if we choose the values $r_i=i$, we obtain a family described by the polynomials
\begin{scriptsize}
\begin{align*}
B(X)&=b_3X^3 + \frac{9919}{1280000} b_0X^2 - \frac{5369}{7200} b_0 X + b_0,\\
C(X)&=\left(\frac{1}{4}b_3^2 - \frac{1}{2073600}b_0^2\right)X^3 + \left(\frac{9919}{2560000}b_0b_3 + \frac{91}{2073600}b_0^2\right)X^2 + \left(-\frac{5369}{14400}b_0b_3 - \frac{253599562853}{176947200000000}b_0^2\right)X \\
&+ \left(\frac{1}{2}b_0b_3 + \frac{3078544001}{165888000000}b_0^2\right).
\end{align*}
\end{scriptsize}%
Then the resulting families have all ranks $6$ and have the following 6 rational points
\begin{equation*}
    P_i =\left(i, i^3 T + \frac{B(i)}{2 i^3}\right),\hspace{0.2cm}i=1,\ldots,6.
\end{equation*}
\end{ex}

\section{Algorithmic remarks}\label{algrem}

\subsection{Constructing rational points}

Theorem~\ref{mtt} added with Lemma~\ref{factors} allows one to find explicit rational points of $E(\Q(T))$ as shown in our previous examples. In this section, 
we explain the construction with an algorithmic point of view. We wrote  PARI-GP scripts to compute ranks and points and find the examples contained in the paper (the scripts are available at \url{http://delaunay.perso.math.cnrs.fr/publications.html}). 

\subsubsection{The case $A=0$}

In this case, the rank comes from (the conjugacy classes of) roots $\theta$ of the polynomial $B=b_2X^2+b_1X+b_0$ such that $C(\theta)$ is a square in $\Q(\theta)$. \smallskip
~\\
If such a $\theta$ is in 
$\Q$, with $C(\theta)= c^2 \in \Q,$ then the point $P=(\theta,c) \in \E(\Q(T))$ where the computation of $c$, if it exists, can be done directly.\smallskip
~\\
If such a $\theta$ is not in $\Q$, then it belongs to a quadratic number 
field $K$ defined by $B$. One can detect if $C(\theta) = r\theta + s$ is a square in $K$ by solving the system of two equations $(x\theta+y)^2 = r\theta + s$ with unknown $x,y \in \Q$ (it consists in finding rational roots of a degree 4 
polynomial with coefficient in $\Q$) or by factoring $X^2-C(\theta)$ in $K$ as it is explained in \cite{cohen0} (and implemented in \cite{pari}). Now, the conjugate $\theta^c$ of $\theta$  belongs to $K$ with 
$\theta^c = -\theta - \frac{b_1}{b_2}$. If $C(\theta) = c^2 = c(\theta)^2$, the square roots of $C(\theta)$ are expressed as a degree one polynomial in $\theta$ and then $P_\theta= (\theta,c(\theta))$; hence $P_{\theta^c}$ 
belongs to $\E(K)(T)$ and is given by polynomials in $\theta$. The trace $P_\theta + P_{\theta^c}$ gives a rational point in $\E(\Q(T))$. An example of such a construction is given by the elliptic curve defined 
by equation~\eqref{exemple_1}.

\subsubsection{The case $A \neq 0$}\label{mascot}

In this case, as it can been seen in Theorem~\ref{mtt} and in Lemma~\ref{factors}, the rank (in fact $r_+$) increases with the number of (conjugacy classes of)  roots, $\theta \in \overline{\Q}$
of $B^2-4AC$ such that $A(\theta)$ is a square in $\Q(\theta)$.\\
Let $F$ be an irreducible factor of $B^2-4AC$ and $\theta \in \overline{\Q}$ one of its roots. Again, one can detect if $A(\theta)$ is a square in $K=\Q(\theta)$
by factorizing the polynomial $X^2- A(\theta)$ in $K$ as in \cite{cohen0}. If this is the case, we obtain a polynomial $a \in \Q[X]$ such that $A(\theta) = a(\theta)^2$. 
Hence, the point $P_\theta= \left(\theta, a(\theta)T + \frac{B(\theta)}{2a(\theta)}\right)$ 
belongs\footnote{If $A(\theta)=0$ then we must have $B(\theta)=0$ and the contribution in $\gcd(A,B)$ has a negative effect on the rank. However, even in that case, we can look at wheter $C(\theta)$ is a square and consider the point 
$(\theta,c(\theta))$ where $c(\theta)^2 = C(\theta)$.} 
to $\E(K)(T)$. We then need to compute the trace of this point.\smallskip
~\\
Notice that if we are in this situation then $\deg F \leq 5$. In particular, the degree is small enough to compute the Galois group and the Galois action on the conjugate of the 
points and to perform the exact computation of the trace as an algebraic expression of $\theta$. An example of such a construction is given by the elliptic curve defined by equation~\eqref{example_2} where in fact the sum of 
2 points is sufficient to find a rational point. This seems to be a general fact: when the degree of $F$ is 4 and $a(\theta)$ generates a degree two subfield, then the sum of two well chosen points is sufficient to obtain a 
rational point. We did not try to prove it but it should come from Galois properties (in these cases the Galois closure of $K$ is $K$ itself or a dihedral field of degree 8).  \smallskip
~\\
A second more general and efficient process was explained to us by Nicolas Mascot to whom we are very grateful. \smallskip
~\\
Let $P_1=(x_1,y_1), P_2=(x_2,y_2), \cdots, P_n=(x_n,y_n)$ be the $n$-conjugates of an algebraic point $P$, with $n\geq 2$. Let $L(X)$ be the Lagrange interpolator of degree $\leq n-1$ such that 
$L(x_i)=y_i$ ($1\leq i \leq n$). Consider the function $F := Y-L(X)$ on the elliptic curve: it has degree at most $ 2n-2$, only one pole $0$ and the points $P_i$ are zeros of $F$. Hence its 
divisor is $-(\deg F)[O] + \sum_{i=1}^n [P_i] + \sum_{j=1}^\ell [Q_j]$ for certain convenient points $Q_j=(x'_j,y'_j)$ ($1 \leq j \leq \ell$).\\
If we replace $Y$ by $L(X)$ in the equation defining the elliptic curve, 
we obtain a polynomial in $X$ whose roots are the $x_i$ and the $x'_j$. Since we can divide this polynomial by the characteristic polynomial of $x_i$, we obtain the conjugates $x'_j$ and 
finally the $y'_j$ by $y'_j=L(x'_j)$. Now, the number $\ell$ of points $Q_j$ is $\ell = \deg F - n \leq n-2 \leq n$.  Since the divisor of $F$ is principal, we have $\sum_{i=1}^n P_i = \sum_{j=1}^\ell Q_j$ and then we can 
apply a recursive computation: if $n=2$ then the degree of $F$ is 3 (coming from the degree of $Y$) so that $\ell=1$ and the point $Q_1$ is rational. 

\begin{ex} 
Let us consider
\begin{equation}\label{example_3}
\E \colon Y^2= X^3 + (T^2 - 11T - 1)X^2 + (-24T^2 + 52T + 5)X + (144T^2 -48T - 67)
\end{equation}  
We have $A(X)=X^2 - 24X + 144=(X-12)^2$, $B=-11X^2 + 52X - 48$ and $C= X^3 - X^2 + 5X - 67$. With our formula, we obtain that the rank of $\E$ is 2. We have 
\begin{eqnarray*}
B^2-4AC &=&  -4X^5 + 221X^4 - 1836X^3 + 5084X^2 - 14304X + 40896\\
&=&-(X-4)(X^2 - 52X + 284)(4X^2 + 3X + 36).
\end{eqnarray*}
Each factor gives a rational point because $A$ is a square and so $A(\theta)$ is a square in $\Q(\theta)$ for every root of $B^2-4AC$. The first factor leads to the point $P_1=(4,8T-1)$, the second factor leads to 
$$
P_2= \left(\frac{-36}{7}T + \frac{2773}{196}, -\frac{36}{7}T^2 + \frac{8557}{196T} - \frac{140869}{2744}\right)
$$
and the third one to 
$$
P_3= \left(\frac{56}{9}T + \frac{604}{81}, \frac{56}{9}T^2 - \frac{524}{27}T - \frac{13229}{729}\right) .
$$
Those points are dependent since $P_1+P_2+P_3 = O$ (the proof of Proposition~\ref{sum_points} can be generalized to this case since $A$ is a square).
\end{ex}
\subsection{Detecting rational points on elliptic curves defined over $\Q$}
    
One can use Theorem~\ref{mtt} and the section above as an alternative to direct brute force point searching on elliptic curves defined over $\Q$. For this purpose, let 
$$
{\mathcal C} \colon y^2 = C(x) = x^3+c_2x^2 +c_1x+c_0
$$
be an elliptic curve defined over $\Q$. In order to find points in ${\mathcal C}$ one can look for polynomials $A$ and $B$ of degree $\leq 2$ such that the rank of 
$$
\E \colon Y^2 = A(X)T^2 + B(X)T + C(X) 
$$
is greater than 1. Then finding rational points on $\E(\Q(T))$ will lead to rational points on ${\mathcal C}$ by specialization of $T=0$. 
\begin{ex}
 Let ${\mathcal C} \colon y^2= x^3+43x+30$. The analytic (and thus the algebraic) rank of ${\mathcal C}$ is 1. Searching through polynomials $A(X),B(X)$ with bounded coefficients, we obtain that $\E$ has rank 1 for example for $A(X)=0$ and $B(X)=X^2-49X+149$. Also, we find the corresponding point $\left(-T +\frac{65}{16}, \frac{15}{4}T-\frac{1055}{64}\right)\in \mathcal C$. Hence, we obtain the point 
$$
\left(\frac{65}{16},-\frac{1055}{64} \right) \in {\mathcal C}(\Q)
$$
of height $\approx 5$ (this point was found in less than 1 second with this process). \smallskip
\end{ex}

Another application is the following. Assume that ${ \mathcal C}$ has rank $\geq 2$ over $\Q$. Then, we can make a brute search on $A$ and $B$ so that the rank of $ \E \colon Y^2 = A(X)T^2 + B(X)T + C(X)$ over 
$\Q(T)$ is $\geq 2$.  Assume that a point $(x_0,y_0) \in {\mathcal C}(\Q)$ is already known: then, one can make a brute search on $A$ and $B$ with bounded coefficients of the form\footnote{This specific choice seems to be more efficient than to take $A$ of the form $(X-x_0)(a_1X+a_0)$ and $B$ of the form $(X-x_0)(b_1X+b_0)$.} $A(X)=(a_1X+a_0)^2$ and $B(X)= (X-x_0)(b_1X+b_0) + (a_1x_0+a_0)y_0$ and such  that the rank 
of $\E $ is greater than or equal to 2. In this case, the point $(x_0,(a_1x_0+a_0)T+y_0)$ is already a point on $\E(\Q(T))$ leading to $(x_0,y_0)$ on ${\mathcal C}(\Q)$ whereas the specialization of other points on $\E$ typically gives a new rational point on ${\mathcal C}$.
\begin{ex} 
Let ${\mathcal C} \colon y^2 = x^3 - x^2 + 5x - 67$. It has (analytic) rank 2 over $\Q$. The point $P_1=(4,1)$ can be easily obtained. Then applying the method above, we find the elliptic curve defined over $\Q(T)$ with 
equation \eqref{example_3}. This leads to a second point 
$$
P_2=\left(\frac{2773}{196}, - \frac{140869}{2744}\right).
$$
The regulator $R$ of $(P_1,P_2)$ is $\approx 12.65$. We notice that, up to the accuracy of the computation, $\frac{L''(\mathcal C,1)}{2!} \approx \Omega R$, where $\Omega$ is the real period of $E$ and $L$ is the L-function of $\mathcal C$. Since the product of the 
Tamagawa numbers is $1$ and $\mathcal C(\Q)$ is torsion free, this suggests that the  Tate-Shafarevich group of $\mathcal C$ is trivial if $(P_1,P_2)$ is a basis of $\mathcal C$. 
\end{ex}

\bibliographystyle{alpha}
\bibliography{strange_2}

\begin{thebibliography}{ALRM07}

\bibitem[ALRM07]{arms_all}
Scott Arms, {\'A}lvaro Lozano-Robledo, and Steven~J. Miller.
\newblock Constructing one-parameter families of elliptic curves with moderate
  rank.
\newblock {\em J. Number Theory}, 123(2):388--402, 2007.

\bibitem[BBD]{bbd2}
Francesco Battistoni, Sandro Bettin, and Christophe Delaunay.
\newblock On the typical rank of rational elliptic surfaces.
\newblock {\em Preprint}.

\bibitem[BDD18]{bettinDavidDelaunay}
Sandro Bettin, Chantal David, and Christophe Delaunay.
\newblock Non-isotrivial elliptic surfaces with non-zero average root number.
\newblock {\em J. Number Theory}, 191:1--84, 2018.

\bibitem[CBFS11]{cannon2011handbook}
John Cannon, Wieb Bosma, Claus Fieker, and Allan Steel.
\newblock Handbook of magma functions.
\newblock 2011.

\bibitem[CF67]{cassels1967algebraic}
John William~Scott Cassels and Albrecht Fr{\"o}hlich.
\newblock {\em Algebraic number theory: proceedings of an instructional
  conference}.
\newblock Academic press, 1967.

\bibitem[Coh93]{cohen0}
Henri Cohen.
\newblock {\em A course in computational algebraic number theory}, volume 138
  of {\em Graduate Texts in Mathematics}.
\newblock Springer-Verlag, Berlin, 1993.

\bibitem[Cow20]{Cowan}
Alex Cowan.
\newblock Conjecture: 100\% of elliptic surfaces over $\mathbb{Q}$ have rank
  zero.
\newblock {\em Preprint available at {\tt
  https://arxiv.org/pdf/2009.08622.pdf}}, 2020.

\bibitem[Des18]{desjardins2}
Julie Desjardins.
\newblock Root number in non-isotrivial integer parameter families of elliptic
  curves.
\newblock {\em Preprint available at {\tt
  https://arxiv.org/pdf/1810.12787.pdf}}, 2018.

\bibitem[Des19]{desjardins1}
Julie Desjardins.
\newblock On the density of rational points on rational elliptic surfaces.
\newblock {\em Acta Arith.}, 189(2):109--146, 2019.

\bibitem[DHP15]{david_all}
Chantal David, Duc~Khiem Huynh, and James Parks.
\newblock One-level density of families of elliptic curves and the {R}atios
  {C}onjecture.
\newblock {\em Res. Number Theory}, 1:Paper No. 6, 37, 2015.

\bibitem[Fer97]{fermigier}
St\'{e}fane Fermigier.
\newblock Une courbe elliptique d\'{e}finie sur {$\bold Q$} de rang {$\geq
  22$}.
\newblock {\em Acta Arith.}, 82(4):359--363, 1997.

\bibitem[LN97]{finite_fields}
Rudolf Lidl and Harald Niederreiter.
\newblock {\em Finite fields}, volume~20 of {\em Encyclopedia of Mathematics
  and its Applications}.
\newblock Cambridge University Press, Cambridge, second edition, 1997.
\newblock With a foreword by P. M. Cohn.

\bibitem[Maz92]{mazur}
Barry Mazur.
\newblock The topology of rational points.
\newblock {\em Experiment. Math.}, 1(1):35--45, 1992.

\bibitem[Mes91]{mestre}
Jean-Fran\c{c}ois Mestre.
\newblock Courbes elliptiques de rang {$\geq 12$} sur {${\bf Q}(t)$}.
\newblock {\em C. R. Acad. Sci. Paris S\'{e}r. I Math.}, 313(4):171--174, 1991.

\bibitem[Mil06]{miller}
Steven~J. Miller.
\newblock Investigations of zeros near the central point of elliptic curve
  {$L$}-functions.
\newblock {\em Experiment. Math.}, 15(3):257--279, 2006.
\newblock With an appendix by Eduardo Due\~{n}ez.

\bibitem[{PAR}]{pari}
{PARI~Group}, Univ. Bordeaux.
\newblock {\em {PARI/GP version {\tt 2.12.0}}}.
\newblock available at \url{http://pari.math.u-bordeaux.fr/}.

\bibitem[RS98]{rosen-silverman}
Michael Rosen and Joseph~H. Silverman.
\newblock On the rank of an elliptic surface.
\newblock {\em Invent. Math.}, 133(1):43--67, 1998.

\bibitem[RS01]{rubin_silverberg}
Karl Rubin and Alice Silverberg.
\newblock Rank frequencies for quadratic twists of elliptic curves.
\newblock {\em Experiment. Math.}, 10(4):559--569, 2001.

\bibitem[Sad21]{Sadek}
Mohammad Sadek.
\newblock Rational points on quadratic elliptic surfaces.
\newblock preprint, 2021.

\bibitem[SS10]{ellipticSurfaces}
Matthias Sch\"{u}tt and Tetsuji Shioda.
\newblock Elliptic surfaces.
\newblock In {\em Algebraic geometry in {E}ast {A}sia---{S}eoul 2008},
  volume~60 of {\em Adv. Stud. Pure Math.}, pages 51--160. Math. Soc. Japan,
  Tokyo, 2010.

\bibitem[ST95]{stewart_top}
C.~L. Stewart and J.~Top.
\newblock On ranks of twists of elliptic curves and power-free values of binary
  forms.
\newblock {\em J. Amer. Math. Soc.}, 8(4):943--973, 1995.

\end{thebibliography}

\end{document}